\documentclass{amsart}

\usepackage{iftex}
\ifPDFTeX\usepackage[utf8]{inputenc}\fi
\usepackage[T1]{fontenc}
\usepackage{amsfonts}
\usepackage{amsmath}
\usepackage{amsthm}
\usepackage{amssymb}
\usepackage{latexsym}
\usepackage{enumerate}
\usepackage{comment}
\usepackage{hyperref}
\usepackage{xcolor}

\usepackage{tikz}
\usetikzlibrary{graphs, positioning}
\usepackage[square,numbers]{natbib}

\newtheorem{theorem}{Theorem}[section]
\newtheorem*{theorem*}{Theorem}
\newtheorem{corollary}[theorem]{Corollary}
\newtheorem*{corollary*}{Corollary} 
\newtheorem{definition}[theorem]{Definition}
\newtheorem{example}[theorem]{Example}
\newtheorem{lemma}[theorem]{Lemma}

\newtheorem{proposition}[theorem]{Proposition}

\numberwithin{equation}{section}

\newtheorem{question}[theorem]{Question}

\newcommand{\K}[1]{\mathcal{K}({#1})}
\newcommand{\B}[2]{\overline{B}({#1, #2})}
\newcommand{\Sp}[2]{S({#1, #2})}
\newcommand{\set}[1]{\{#1\}}

\newcommand{\NN}{\mathbb{N}}
\newcommand{\R}{\mathbb{R}}
\newcommand{\Z}{\mathbb{Z}}
\newcommand{\Td}{T_{k}}
\newcommand{\dH}{d_H}

\title{Plasticity in graph metric spaces and their hyperspaces}
\author{Clayton Suguio Hida $^{\dagger, \ast}$}
\address{$^{\dagger}$Universidade do Estado do Amap\'a - UEAP, Brazil}
\address{$^{\ast}$ Corresponding author: Clayton Suguio Hida. email: clayton.hida@ueap.edu.br}

\begin{document}

\begin{abstract}
A metric space is plastic if every bijective nonexpansive self-map is an isometry.
We prove that the hyperspace of nonempty compact subsets of every connected,
locally finite, regular graph, equipped with the Hausdorff metric associated with
the path metric, is plastic. The proof identifies the sets with smallest unit
balls as the nonempty subsets of adjacent-twin classes and shows that every
nonexpansive bijection induces an automorphism of the quotient graph preserving
the sizes of these classes. Singletons are preserved when there are no adjacent
twins, but need not be preserved in general. We also prove that
$\K{K\times G}$ is plastic for every compact connected metric space $K$ and every
connected, locally finite, regular graph $G$, with the supremum metric on the
product, and give a more general criterion for such products. Further results
include a plastic metric space whose hyperspace is not plastic, plasticity of
every tree, and plasticity of the hyperspace of a connected, locally finite graph
with only finitely many vertices of minimum degree.

\smallskip
\noindent \textbf{Keywords.} Metric spaces, graphs, path metric, trees, Hausdorff
metric, hyperspaces, nonexpansive bijections, isometries, adjacent twins, Wiener
index.
\smallskip

\noindent \textbf{Mathematics Subject Classification.} 54E35, 54B20, 05C12, 54E40,
47H09.
\end{abstract}

\maketitle

\tableofcontents

\section{Introduction}\label{sec:intro}

A map $f\colon X\to Y$ between metric spaces is \emph{nonexpansive} if
$d(f(x),f(y))\le d(x,y)$ for all $x,y\in X$. A metric space $X$ is
called \emph{plastic} if every bijective nonexpansive map
$f\colon X\to X$ is an isometry. A classical theorem of Freudenthal and
Hurewicz \cite{freudenthal1936} implies that every totally bounded metric
space is plastic. The terminology and the systematic study of plasticity
were introduced by Naimpally, Piotrowski and Wingler
\cite{naimpally2006plasticity}.

Beyond totally bounded spaces, plasticity is considerably more delicate.
Much of the recent work has concerned the unit balls of Banach spaces.
Cascales, Kadets, Orihuela and Wingler \cite{CKOW} proved that the unit ball
of every strictly convex Banach space is plastic. Whether the unit ball of
every Banach space is plastic remains open, although affirmative answers are
known for several classes, including $\ell_1$-sums of strictly convex spaces
\cite{KZell1}, $c$ and $c_0$ \cite{Leo}, certain $C(K)$ spaces
\cite{Fakhoury}, and the real space $\ell_\infty$ \cite{HallerLeo2026}.
Conditional results for $\ell_\infty$-sums appear in \cite{Kurik}, and
further results can be found in \cite{Zavarzina2018,HLZ,Karpenko,AKZ}.
Plasticity has also been studied for subsets of the real line
\cite{LangemannZavarzina,Bielas} and for metric groups \cite{BMZ}. Kadets
and Zavarzina \cite{KZpairs} introduced the related notion of a
\emph{plastic pair}.

This paper concerns plasticity of hyperspaces. For a metric space $X$, let
$\K X$ denote the family of all nonempty compact subsets of $X$, equipped
with the Hausdorff metric \cite{michael1951,nadler1978hyperspaces}. In
\cite{hida2026plasticity}, the author proved that plasticity of $\K X$
implies plasticity of $X$ and asked whether the converse holds.

\begin{question}\label{main-question}
Is $\K X$ plastic whenever $X$ is plastic?
\end{question}

The answer is negative. The counterexample, suggested by G.~Vitturi during
the XXVI Brazilian Topology Meeting, is a countable uniformly discrete
metric space of diameter $2$, obtained by replacing each natural number by
a pair of points at distance $1$. This space is plastic
(Proposition~\ref{prop:Xm-plastic}), but its hyperspace is not
(Theorem~\ref{counterexample}). The obstruction lies in $\K X$, which
splits into clusters of different finite sizes: a bijection can map several
of these clusters into one, strictly decreasing some distances without
increasing any.

The counterexample shows that additional assumptions are needed for
plasticity to pass from a space to its hyperspace. It also exhibits a local
symmetry: the two points in each pair have the same closed unit ball. In
graphs, adjacent vertices with this property are called \emph{adjacent
twins}, and understanding how such symmetries act on the hyperspace is
central to our approach.

We focus on connected graphs endowed with the unweighted path metric. A
basic positive example is $\mathbb Z$, viewed as a connected $2$-regular
graph: both $\mathbb Z$ and its hyperspace are plastic, the latter by
\cite[Theorem 3.8]{hida2026plasticity}. This leads to the following
question.

\begin{question}\label{graph-question}
Let $G$ be a plastic connected graph, endowed with the unweighted path
metric. Is $\K G$ plastic?
\end{question}

Connectedness and local finiteness alone do not guarantee plasticity of
$G$ (Example~\ref{ex:nonplastic}). Adding regularity does: every connected,
locally finite, regular graph is plastic
(Theorem~\ref{Plasticity-regular-graph}). Our main theorem establishes the
corresponding result for its hyperspace.

\begin{theorem*}[Theorem~\ref{every-regular-graph-plastic}]
Let $G$ be a connected, locally finite, regular graph. Then $\K G$ is
plastic.
\end{theorem*}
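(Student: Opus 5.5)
We follow the strategy announced in the abstract. Let $G$ be $k$-regular, and let $f\colon \K G\to\K G$ be a nonexpansive bijection. Since $G$ is uniformly discrete with integer distances, $\K G$ is the family of nonempty finite vertex sets, and $\dH$ takes values in $\NN\cup\{0\}$. We show that $f^{-1}$ is nonexpansive as well. As in the proofs for $\mathbb Z$ in \cite{hida2026plasticity} and for regular graphs (Theorem~\ref{Plasticity-regular-graph}), the main tool is counting points in unit balls and spheres of $\K G$.

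The first step is to describe the closed unit ball $\B{A}{1}$ in $\K G$. A set $B$ satisfies $\dH(A,B)\le 1$ exactly when $B\subseteq N[A]$ and every $a\in A$ has a point of $B$ in $N[a]$. Hence $|\B{A}{1}|$ is finite and depends on the local structure around $A$. We aim to show that $|\B{A}{1}|$ is minimized, say with value $m$, exactly when $A$ is a nonempty subset of an adjacent-twin class $C$, i.e.\ a class of vertices sharing the same closed neighbourhood $N[v]$. In that case every $B$ at distance $\le 1$ is a nonempty subset of $N[v]$, so $|\B{A}{1}|\le 2^{k+1}-1$. If $A$ is not contained in a twin class, then either $N[A]$ is strictly larger than $N[v]$, or the constraints allow strictly more sets $B$. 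Regularity is what makes the count $2^{k+1}-1$ uniform, so that these sets are exactly the minimizers.

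The second step is the standard Freudenthal--Hurewicz style argument. Nonexpansiveness gives $f(\B{A}{1})\subseteq \B{f(A)}{1}$, and bijectivity makes this map injective, so $|\B{f(A)}{1}|\ge|\B{A}{1}|$. We then argue that $f^{-1}$ maps minimizers to minimizers. For a minimizer $D$ with $f(A)=D$, we get $|\B{A}{1}|\le m$, so $A$ is a minimizer too. This needs care, because a bijection could send non-minimizers onto minimizers. To rule this out we use a counting/exhaustion argument over finite radius balls: the family of minimizers inside $\B{A_0}{r}$ is finite, and the inclusions $f(\B{A}{r})\subseteq\B{f(A)}{r}$ combined with injectivity force equality, as in the compact case. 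From equality of ball cardinalities we get $f(\B{A}{1})=\B{f(A)}{1}$ for minimizers. This shows that $f$ permutes the sets contained in twin classes and preserves adjacency between classes. It therefore induces an automorphism $\varphi$ of the quotient graph $G/{\sim}$, and since $f$ restricts to a bijection between the power sets of $C$ and $\varphi(C)$ minus $\emptyset$, it preserves class sizes $|C|$.

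The final step, which we expect to be the main obstacle, is to pass from this control on small sets to arbitrary finite sets. The automorphism $\varphi$ together with size preservation lifts to an automorphism $\sigma$ of $G$, which is an isometry of $G$ and hence induces an isometry $\hat\sigma$ of $\K G$. Replacing $f$ by $\hat\sigma^{-1}\circ f$, we may assume $f$ fixes every twin class setwise on its subsets. We then prove $f(A)=A$ for all finite $A$ by induction on the number of twin classes $A$ meets. We express $A$ through its distances to the class-subsets: $\dH(A,S)$ for $S$ inside a class is determined by $\max_{a\in A}d(a,S)$ and $d(S,A)$, and distinct sets $A\ne A'$ are separated by such data. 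Nonexpansiveness gives $\dH(f(A),S)\le\dH(A,S)$ for every fixed $S$. Applying the same inequality to $f^{-1}$ would yield equality, but $f^{-1}$ is not yet known to be nonexpansive. Instead, we combine the inequality with a cardinality argument on the finite set $\{B:\dH(B,S_i)\le r_i\}$, which $f$ maps injectively into itself and hence bijectively onto itself. This gives equality and identifies $f(A)=A$. Therefore the original $f=\hat\sigma$ is an isometry.
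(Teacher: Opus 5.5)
Your proposal has genuine gaps in the second and final steps; the first step is also only asserted.

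\textbf{The second step.} You do not establish that $f$ induces an automorphism of the quotient preserving class sizes, and this is the heart of the proof. The inequality $|\B{A}{1}|\le|\B{f(A)}{1}|$ gives only $f^{-1}[M]\subseteq M$ for the infinite family $M$ of minimizers. The danger is not that $f$ pulls non-minimizers into $M$; your own inequality excludes this. The danger is that $f$ pushes minimizers out of $M$, which is exactly what happens in $\K{X_2}$ (Theorem~\ref{counterexample}).

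A Freudenthal--Hurewicz exhaustion does not settle this. The map $f$ sends $\B{A}{r}$ into a different ball $\B{f(A)}{r}$, and for $r\ge2$ regularity does not make the sizes of such balls uniform. Likewise, the bijection $\K{C}\to\K{\varphi(C)}$ you use to get size preservation is what has to be proved.

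The missing idea is local:
\begin{enumerate}
\item Let $q(C)$ be the class containing $f^{-1}(C)$.
\item Show $f^{-1}[\K{C}]\subseteq\K{q(C)}$, so $|C|\le|q(C)|$.
\item Show that $q$ is injective and preserves adjacency.
\item Use the identity $\sum_{D\in N_Q[C]}|D|=k+1$, valid at every class by regularity, at both $C$ and $q(C)$. This forces $q[N_Q[C]]=N_Q[q(C)]$ and $|q(D)|=|D|$.
\item Connectedness of $Q_G$ then gives surjectivity (Theorem~\ref{automorphism_g}).
\end{enumerate}

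\textbf{The first step.} The lower bound is only asserted. When $N[A]$ is larger than a closed neighbourhood, the hitting constraints from many points of $A$ could a priori cut the count down. With $p=|N[A]|$, the paper needs the bound $2^p-|A|2^{p-k-1}$ for $|A|\le 2^k$, and Ore's theorem on dominating sets for $|A|>2^k$.

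\textbf{The final step.} It aims at a false statement. After normalizing so that $f[\K{C}]=\K{C}$ for every class, $f$ need not be the identity. For adjacent twins $x,y$, the isometry of Lemma~\ref{twins} exchanges $\{x\}$ and $\{x,y\}$. It preserves every $\K{C}$, induces the identity on $Q_G$, and is not induced by any isometry of $G$. So neither ``$f(A)=A$'' nor ``$f=\hat\sigma$'' holds in general.

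Your separation claim fails even without twins: in $\Z$,
$\dH(\{0,2\},\{v\})=\dH(\{0,1,2\},\{v\})=\max(|v|,|v-2|)$ for every $v$.

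Your cardinality observation is, however, all that is needed. For $S\in\K{C}$, $c\in C$ and $n\ge1$, one has $\B{S}{n}=\K{\B{c}{n}}$ independently of $S$. Hence $f$ maps the finite set $\K{\B{c}{n}}$ into itself, and therefore onto itself. This restriction is a nonexpansive bijection of a finite metric space, hence an isometry by Theorem~\ref{tb-plastic}. Any two finite sets lie in a common $\K{\B{c}{n}}$, so $f$ is an isometry. This is how the paper concludes, so the step you expected to be the main obstacle is the easy one.
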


The main ingredient in the proof is a characterization of the sets whose
unit balls in the hyperspace have minimum cardinality. For every connected,
locally finite graph of minimum degree $k\ge1$, we prove that
\[
\min_{A\in\K G}|\B A1|=2^{k+1}-1.
\]
Moreover, equality holds exactly when $A$ is a nonempty subset of a class
of adjacent twins whose vertices have minimum degree
(Theorem~\ref{thm:min-hyperballs}). The proof combines estimates based on
dominating sets with a count of the subsets of the closed neighbourhood of
$A$. This characterization does not require regularity and will also be
used in the nonregular case.

For a regular graph without adjacent twins, the sets attaining this minimum
are precisely the singletons. In the presence of twins, their role is
played by the finite families $\K C$, where $C$ ranges over the twin
classes. We show that every nonexpansive bijection of $\K G$ permutes these
families and induces an automorphism of the quotient graph that preserves
class sizes (Theorem~\ref{automorphism_g}). Lifting this automorphism to an
isometry of $G$ and composing with the induced hyperspace map reduces the
proof to a map that preserves each family $\K C$. We then show that such a
map preserves the hyperspaces of finite balls. Its restriction to each of
these finite metric spaces is an isometry, and these restrictions together
yield an isometry of $\K G$.

This argument must account for the fact that hyperspace maps need not
preserve singletons. For regular trees, every nonexpansive bijection of the
hyperspace does preserve them (Corollary~\ref{main-regular-trees}). By
contrast, adjacent twins allow a hyperspace isometry to exchange a singleton
with a two-point set (Lemma~\ref{twins}). Thus, even in the regular case, a
hyperspace isometry need not be induced by an isometry of the underlying
graph.

The description of how nonexpansive bijections act on hyperspaces of finite
balls also allows us to extend \cite[Theorem 4.5]{hida2026plasticity} to
products. The following criterion isolates the property required of the
discrete factor. All products below carry the supremum metric.

\begin{theorem*}[Theorem~\ref{main_product}]
Let $(K,d_K)$ be a compact connected metric space and let $(L,d_L)$ satisfy
the following conditions:
\begin{enumerate}
\item $d_L(u,v)\ge1$ for all distinct $u,v\in L$, and every bounded subset
of $L$ is finite.
\item For every nonexpansive bijection $h$ of $\K L$, there exist a
bijective isometry $g\colon L\to L$ and nonempty finite sets
$L_1\subseteq L_2\subseteq\cdots$, with $\bigcup_nL_n=L$, such that
\[
h[\K{L_n}]=\K{g[L_n]}\qquad(n\ge1).
\]
\end{enumerate}
Then $\K{K\times L}$ is plastic.
\end{theorem*}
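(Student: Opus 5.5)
The plan is to reduce to compact pieces of the hyperspace, where the Freudenthal--Hurewicz theorem applies. Let $F$ be a nonexpansive bijection of $\K{K\times L}$, and let $\pi\colon \K{K\times L}\to\K L$ send $A$ to its projection onto $L$. Since bounded subsets of $L$ are finite and distinct points are at distance $\ge1$, every compact $A\subseteq K\times L$ meets only finitely many fibres $K\times\{u\}$. Hence $\pi(A)$ is a nonempty finite set and $\pi$ is well defined.

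\textbf{Components of the hyperspace.} Because the metric is the supremum metric, $\dH(\pi A,\pi B)\le \dH(A,B)$. If $\pi A\ne\pi B$, some point of one set has all its $L$-distances to the other projection at least $1$, so $\dH(A,B)\ge1$. For a finite $S\subseteq L$, the fibre $\pi^{-1}(S)$ is homeomorphic to $\K K^{S}$, which is connected because $K$ (and hence $\K K$) is connected. Consequently the sets $\pi^{-1}(S)$, $S\in\K L$, are exactly the connected components of $\K{K\times L}$, and they are clopen and pairwise at distance $\ge1$. Taking the sets $K\times S$ and $K\times T$ shows
\[
\inf\{\dH(A,B): \pi A=S,\ \pi B=T\}=\dH(S,T).
\]

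\textbf{The induced map $h$.} Since $F$ is continuous, it maps each component into a single component. This defines $h\colon\K L\to\K L$ by $h(S)=\pi(F(A))$ for any $A\in\pi^{-1}(S)$. Evaluating $F$ at $K\times S$ and $K\times T$ and using the displayed infimum, $\dH(h(S),h(T))\le\dH(F(K\times S),F(K\times T))\le \dH(S,T)$, so $h$ is nonexpansive. Surjectivity of $F$ gives surjectivity of $h$.

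\textbf{Injectivity of $h$ (the main obstacle).} Suppose $h(S)=h(T)$ with $S\ne T$. The first thing I would try is a compactness argument on the layers $\K{K\times M}$ with $M\subseteq L$ finite; each such layer is compact, being a finite union of the compact sets $\pi^{-1}(S)\cong\K K^{S}$ with $S\subseteq M$. Let $M=\bigcup h(\K{N})$ for a finite $N$. Since $F$ is a bijection, $F^{-1}$ maps $\K{K\times M}$ into components indexed by $h^{-1}(\K M)$. I would then try to show that $h^{-1}$ of a finite set is finite, using nonexpansiveness of $h$ and finiteness of balls in $\K L$. With this, the compact space $X=F^{-1}(\K{K\times M})\cup\K{K\times N}$ is covered by finitely many components. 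I would then compare the number of components before and after applying $F$, using that $F$ restricted to each compact component is a homeomorphism onto its image, to derive a contradiction. If that counting fails, a fallback is to invoke the uniqueness of components as connected sets and the injectivity of $F$ inside one component $\K K^{h(S)}$, playing the two disjoint embedded copies against each other.

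\textbf{Conclusion.} Once $h$ is a nonexpansive bijection, hypothesis (2) gives a bijective isometry $g$ of $L$ and finite sets $L_n\uparrow L$ with $h[\K{L_n}]=\K{g[L_n]}$. Since $F$ is a bijection respecting the component correspondence $S\mapsto h(S)$, it follows that $F[\K{K\times L_n}]=\K{K\times g[L_n]}$. Let $\Phi$ be the hyperspace isometry induced by the isometry $\mathrm{id}_K\times g^{-1}$ of $K\times L$. Then $F'=\Phi\circ F$ is a nonexpansive bijection of each compact metric space $\K{K\times L_n}$ onto itself, hence an isometry of it by Freudenthal--Hurewicz. Any two elements of $\K{K\times L}$ lie in a common $\K{K\times L_n}$, so $F'$ is an isometry, and therefore $F=\Phi^{-1}\circ F'$ is an isometry.
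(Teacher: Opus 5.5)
\textbf{There is a genuine gap: injectivity of $h$ is not proved.} Your treatment of the components $\pi^{-1}(S)$, the definition of $h$, its nonexpansiveness and surjectivity, and the final normalization by $\mathrm{id}_K\times g^{-1}$ followed by Freudenthal--Hurewicz on the compact layers $\K{K\times L_n}$ all match the paper's proof. The gap is exactly at the step you flag, and the route you propose for it does not work.

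You suggest deriving finite fibres of $h$ from nonexpansiveness of $h$ and finiteness of balls in $\K L$. Those two properties control images of balls, not preimages, so they cannot bound fibres. For instance, let $\varphi\colon\Z\to\Z$ be the identity on $n\le 0$ and run through $1,0,1,2,1,0,1,2,3,2,1,0,\dots$ on $n=1,2,3,\dots$. Then $\varphi$ is a nonexpansive surjection, and it induces a nonexpansive surjection $A\mapsto\varphi[A]$ of $\K{\Z}$ with an infinite fibre over $\{0\}$. Any valid argument must use bijectivity and continuity of $F$ on the components. Your fallback, ``playing two disjoint embedded copies against each other'', gives nothing on its own, since a connected set can contain two disjoint compact subsets.

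\textbf{The missing idea is Sierpi\'nski's theorem} (Lemma~\ref{sierpinski}). Fix $S$ and put $\mathcal T=h^{-1}(h(S))$. Surjectivity of $F$ and the definition of $h$ give
\[
\pi^{-1}(h(S))=\bigcup_{T\in\mathcal T}F[\pi^{-1}(T)].
\]
The pieces have the following properties:
\begin{enumerate}
\item they are pairwise disjoint, by injectivity of $F$;
\item they are nonempty and compact, as continuous images of the compact sets $\pi^{-1}(T)$, hence closed;
\item there are countably many of them, because (1) makes $L$, and hence $\K L$, countable.
\end{enumerate}
Since $\pi^{-1}(h(S))$ is a compact connected metric space, the cover has exactly one member, so $\mathcal T=\{S\}$.

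Your component-counting idea handles only $|\mathcal T|<\infty$. That case is trivial, since a connected space is not a finite disjoint union of two or more nonempty closed sets. The countably infinite case is precisely where Sierpi\'nski's theorem is needed. This step is indispensable: hypothesis (2) applies only to nonexpansive \emph{bijections} of $\K L$. Once $h$ is bijective, $F[\pi^{-1}(S)]=\pi^{-1}(h(S))$ follows, and your conclusion goes through as written.
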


The connected components of $\K{K\times L}$ are indexed by the nonempty
finite subsets of $L$. Compactness and connectedness of $K$ allow us to
show that every nonexpansive bijection of $\K{K\times L}$ induces a
nonexpansive bijection of $\K L$. The hypothesis on $L$ then allows us to
compose with an isometry so that each compact hyperspace
$\K{K\times L_n}$ is preserved. Plasticity of these compact subspaces
yields plasticity of the whole hyperspace.

For a connected, locally finite, regular graph, the second condition holds
with $L_n$ equal to the ball of radius $n$ centered at a fixed vertex. We
therefore obtain the following consequence.

\begin{corollary*}[Corollary~\ref{cor:product-regular}]
Let $K$ be a compact connected metric space and let $G$ be an infinite
connected, locally finite, regular graph. Then $\K{K\times G}$ is plastic.
In particular, $\K{K\times T_k}$ is plastic for every regular tree $T_k$ of
degree $k\ge2$.
\end{corollary*}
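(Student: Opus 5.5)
The corollary follows from Theorem~\ref{main_product} with $L=G$, so the plan is to check the two hypotheses of that theorem for an infinite connected, locally finite, regular graph $G$ with the path metric.

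\textbf{Condition (1).} This is immediate. The path metric takes values in $\NN$, so distinct vertices are at distance at least $1$. A bounded set lies in some ball $\B{v}{n}$, and by local finiteness and induction on $n$ each such ball is finite.

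\textbf{Condition (2).} This is where all the work lies. Fix a vertex $v_0$ and put $L_n=\B{v_0}{n}$. These are finite, increasing and exhaust $G$ by connectedness. Given a nonexpansive bijection $h$ of $\K G$, I would reuse the machinery behind Theorem~\ref{every-regular-graph-plastic}, in four steps.
\begin{enumerate}
\item By Theorem~\ref{thm:min-hyperballs}, the sets of $\K G$ whose unit hyperballs have minimum cardinality $2^{k+1}-1$ are exactly the nonempty subsets of adjacent-twin classes; regularity means every vertex has minimum degree.
\item Theorem~\ref{automorphism_g} then gives that $h$ permutes the families $\K C$, for $C$ a twin class, and induces an automorphism of the quotient graph preserving class sizes.
\item Lift this automorphism to a bijective isometry $g$ of $G$ by mapping each class $C$ bijectively onto its image class, which has the same size. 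Twins have identical closed neighbourhoods, so any such choice preserves adjacency and hence distances.
\item Replace $h$ by $h'=\hat g^{-1}\circ h$, where $\hat g$ is the hyperspace isometry induced by $g$. Then $h'$ is a nonexpansive bijection fixing each family $\K C$.
\end{enumerate}

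The remaining step, established in the proof of the main theorem, is that such an $h'$ satisfies $h'[\K{\B{v}{n}}]=\K{\B{v}{n}}$ for every $n$. Given that, $h[\K{L_n}]=\hat g[\K{L_n}]=\K{g[L_n]}$, which is exactly condition (2). If that invariance is not stated separately, I would prove it directly by induction on $n$.
\begin{itemize}
\item A compact (finite) set $A$ lies in $\B{v}{n}$ iff its Hausdorff distance to the class $\K{C_v}$, with $C_v\ni v$, is controlled. More precisely, $A\subseteq \B vn$ iff $\dH(A,\{v\})\le n$, or equivalently every point of $A$ is within $n$ of $v$.
\item Since $h'$ is nonexpansive and fixes $\K{C_v}$ setwise, $h'$ maps $\K{\B vn}$ into $\K{\B{v}{n}}$, using that $C_v\subseteq\B v1$ and adjusting radii by twin-equivalence. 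Distances to $C_v$-subsets equal distances to $v$ up to the twin relation, because twins have equal distances to all other vertices.
\item Injectivity plus finiteness of $\K{\B vn}$ upgrades "into" to "onto".
\end{itemize}

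The main obstacle is the first bullet, making "into" precise. The Hausdorff distance from $A$ to $\{v\}$ is $\max_{a\in A}d(a,v)$, so nonexpansiveness gives $\dH(h'(A),h'(\{v\}))\le n$. But $h'(\{v\})$ is only some subset $S$ of $C_v$, not necessarily $\{v\}$. Because all points of $C_v$ have the same distances to vertices outside $C_v$, and distance at most $1$ to each other, $\dH(B,S)\le n$ still forces $B\subseteq\B vn$ for $n\ge1$. The case $n=0$ is handled by twin-class invariance.
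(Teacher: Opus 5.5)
Your proposal is correct and follows the paper's route. You get (P1) from finiteness of balls, and (P2) by normalizing $h$ with Theorem~\ref{automorphism_g} and Lemma~\ref{lem:extension}, then showing the normalized map fixes each $\K{\B{v}{n}}$ for $n\ge1$. That last step is Corollary~\ref{cor:normalized-balls}, and its key point is exactly your observation that $\B{S}{n}=\K{\B{v}{n}}$ for every nonempty $S\subseteq C_v$. One small correction: your remark that the case $n=0$ is ``handled by twin-class invariance'' is false when $G$ has adjacent twins, since the isometry of Lemma~\ref{twins} sends $\{x\}$ to $\{x,y\}$. This is harmless, because (P2) only requires the sets $L_n$ for $n\ge1$.
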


This applies to regular graphs with or without adjacent twins and, by
\cite{hida2026plasticity}, also gives plasticity of $K\times G$ itself.

We also give a partial answer to Question~\ref{graph-question} beyond the
regular case, under a finiteness condition on the vertices of minimum
degree.

\begin{theorem*}[Corollary~\ref{cor:finite-min-degree}]
Let $G$ be an infinite connected, locally finite graph, $k$ its minimum
degree and $G'=\{v:\deg(v)=k\}$. If $G'$ is finite, then both $G$ and
$\K G$ are plastic.
\end{theorem*}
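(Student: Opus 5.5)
The plan is to treat $G$ and $\K G$ separately, in each case localizing a nonexpansive bijection to the finite set $G'$ of minimum-degree vertices and then running the regular-case arguments outside it.

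\textbf{Plasticity of $G$.} Let $f\colon G\to G$ be a nonexpansive bijection. Since $f$ is a bijection, $f$ sends adjacent vertices to vertices at distance at most $1$, hence to adjacent vertices. The vertex $f(v)$ therefore has at least $\deg(v)$ neighbours among the images of the neighbours of $v$, so $\deg(f(v))\ge\deg(v)$. Applying this to $f^{-1}(w)$ for $w\in G'$ gives $\deg(f^{-1}(w))\le\deg(w)=k$, hence $\deg(f^{-1}(w))=k$ and $f^{-1}[G']\subseteq G'$. Because $G'$ is finite and $f$ is injective, $f[G']=G'$. In the same way I would stratify by the level sets $G_j=\{v:\deg(v)=k+j\}$. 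I do not expect each $G_j$ to be finite, so to conclude that $f^{-1}$ is nonexpansive I plan the following. I take balls $B_n=\B{v_0}{n}$ with $v_0\in G'$, which are finite, and show that $f[B_n]\subseteq B_n$, using $f(v_0)\in G'$ and nonexpansiveness after choosing $n$ large enough that $G'\subseteq B_n$. Concretely, $d(f(v),f(v_0))\le n$ only gives $f(v)\in\B{f(v_0)}{n}$, so I would instead use $\B{G'}{n}$: it is finite, and $f$ maps it into itself since $f[G']=G'$. Being injective, $f$ then permutes the finite set $\B{G'}{n}$. The restriction of $f$ to $\B{G'}{n}$ is a nonexpansive bijection of a finite metric space, hence an isometry by Freudenthal--Hurewicz (with the metric inherited from $G$). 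Since these sets exhaust $G$, $f$ is an isometry.

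\textbf{Plasticity of $\K G$.} I would mimic this argument. By Theorem~\ref{thm:min-hyperballs}, the sets $A$ with $|\B A1|=2^{k+1}-1$ are exactly the nonempty subsets of adjacent-twin classes of vertices of degree $k$. Twin classes inside $G'$ are finite, and there are finitely many of them, so this family $\mathcal M$ is finite. For a nonexpansive bijection $h$, each $h[\B A1]$ lies in $\B{h(A)}1$. By injectivity, $|\B{h(A)}1|\ge|\B A1|$. Applying this with $A=h^{-1}(M)$ for $M\in\mathcal M$ and using minimality gives $h^{-1}[\mathcal M]\subseteq\mathcal M$, and by finiteness $h[\mathcal M]=\mathcal M$. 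Then $\mathcal N_n=\{A\in\K G: A\subseteq\B{G'}{n}\}$ is finite. I would show that $h[\mathcal N_n]\subseteq\mathcal N_n$: for $A\in\mathcal N_n$ pick $M\in\mathcal M$ with $M\subseteq G'$ and use $\dH(h(A),h(M))\le\dH(A,M)$.

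\textbf{Main obstacle.} The containment $h[\mathcal N_n]\subseteq\mathcal N_n$ is the step I expect to be hardest. The bound from a single $M$ does not confine $h(A)$: the Hausdorff distance bounds distances from points of $h(A)$ to the set $h(M)\subseteq G'$, but $\dH(A,M)$ may exceed $n$. To fix this I would vary $M$ over all of $\mathcal M$, or use the maximal element $\bigcup\mathcal M\cap G'$, or replace $\mathcal N_n$ by $\{A: \dH(A,M_0)\le n\}$ for a fixed $M_0\in\mathcal M$. This last set is finite by local finiteness, and it is mapped into $\bigcup_{M\in\mathcal M}\{A:\dH(A,M)\le n\}$. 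That union is again finite and is invariant under $h$, because $h$ permutes $\mathcal M$ and is nonexpansive. So I would take $\mathcal N_n=\bigcup_{M\in\mathcal M}\{A:\dH(A,M)\le n\}$ directly. Then $h$ restricts to an injection, hence a bijection, of each finite $\mathcal N_n$, so it is an isometry there by Freudenthal--Hurewicz. Since the sets $\mathcal N_n$ exhaust $\K G$, $h$ is an isometry.

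This union trick also repairs the first part: there one takes $\bigcup_{w\in G'}\B wn$, which is invariant because $f$ permutes $G'$.
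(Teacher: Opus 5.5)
Your proof is correct, but it takes a different and more elementary route than the paper.

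\textbf{The paper's route.} The paper first proves a general criterion (Theorem~\ref{general_finite_minimum}). Once $f$ permutes the finite set $X'$, it follows the orbit of a single $x\in X'$ and notes that $f$ maps each $\B{f^{i}(x)}{r}$ bijectively onto $\B{f^{i+1}(x)}{r}$. It then uses the distance sums $\delta$ around the cycle to get $\delta(\B xr)=\delta(\B{f(x)}r)$. Because it is comparing two \emph{different} finite spaces, it needs the Kadets--Zavarzina criterion for finite plastic pairs (Lemma~\ref{plastic-pairs}). The corollary follows by applying this criterion to $G$, and to $\K G$ via Theorem~\ref{thm:min-hyperballs}.

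\textbf{Your route.} You take the radius-$n$ neighbourhood of the whole invariant finite set: $\bigcup_{w\in G'}\B wn$ in $G$, and $\bigcup_{M\in\mathcal M}\B Mn$ in $\K G$. The centres are permuted and the map is nonexpansive, so these finite sets are mapped into, hence onto, themselves. Only plasticity of finite metric spaces (Theorem~\ref{tb-plastic}) is then needed, and plastic pairs and $\delta$ disappear entirely.

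\textbf{Comparison.} The same union trick, with $\bigcup_{x\in X'}\B xr$, proves Theorem~\ref{general_finite_minimum} verbatim. So the paper's formulation buys a reusable statement and a link to the plastic-pair literature, not extra strength.

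\textbf{Write-up points.} State explicitly that $k\ge1$ before invoking Theorem~\ref{thm:min-hyperballs}; an infinite connected graph has no isolated vertices. In the first part, go directly to $\bigcup_{w\in G'}\B wn$; the level sets $G_j$ and the single ball $\B{v_0}{n}$ play no role.
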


The proof uses a general criterion for metric spaces with finite balls:
plasticity follows if only finitely many points have unit balls of minimum
cardinality (Theorem~\ref{general_finite_minimum}). A nonexpansive bijection
permutes these points, and, following their finite orbits, we compare
successive balls using sums of pairwise distances and the criterion for
finite plastic pairs from \cite{KZpairs}. For finite graph metrics, these
sums are the Wiener index. Combined with
Theorem~\ref{thm:min-hyperballs}, this criterion proves the preceding
corollary; in particular, it applies to locally finite trees with a
nonempty finite set of leaves.

When there are infinitely many vertices of minimum degree, neither this
criterion nor the argument for regular graphs applies directly. Thus,
Question~\ref{graph-question} remains open for such locally finite graphs.

The paper is organized as follows. Section~\ref{sec:prelim} recalls the
necessary facts about plasticity and the Hausdorff metric and constructs the
counterexample to Question~\ref{main-question}. Section~\ref{sec:graphs}
introduces graph metric notation and proves plasticity of connected,
locally finite, regular graphs. It also explains the restriction to
unweighted path metrics: a weighted path need not be plastic, even though
its underlying graph is $2$-regular (Example~\ref{ex:weighted}).
Section~\ref{sec:regular} establishes the minimum-cardinality
characterization, proves the main theorem, and examines the role of twins
in singleton preservation (Corollary~\ref{cor:twins-main}).
Section~\ref{sec:extras} proves the product theorem and its consequences.
Finally, Section~\ref{sec:nonregular} develops the finite-minimum criterion
and applies it to nonregular graphs.

\section{Preliminaries}\label{sec:prelim}

\subsection{Plasticity and hyperspaces}

Let $(X,d)$ be a metric space. For $x\in X$ and $r>0$, we denote by $\B x r$ the
closed ball centred at $x$ with radius $r$, and by
$\Sp x r=\{y\in X: d(x,y)=r\}$ the corresponding sphere. We denote by $\K X$ the hyperspace of all
nonempty compact subsets of $X$, equipped with the Hausdorff metric
\[
d_H(A,B)=\max\Big\{\sup_{a\in A}d(a,B),\ \sup_{b\in B}d(b,A)\Big\}.
\]

The following elementary observation from
\cite{hida2026plasticity} will be used extensively throughout the paper.

\begin{lemma}\label{general_fact_injectivity}
Let $(X,d)$ be a metric space and let $f\colon X\to X$ be a nonexpansive injection.
Then for every $x\in X$ and $r>0$, we have $f[\B x r]\subset\B{f(x)}{r}$. In particular, $|\B x r|\le|\B{f(x)}{r}|$.
\end{lemma}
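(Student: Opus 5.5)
The plan is to prove the inclusion directly from the definitions and then read off the cardinality bound from injectivity.

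First, fix $x\in X$ and $r>0$, and let $y\in\B x r$, so $d(x,y)\le r$. Since $f$ is nonexpansive,
\[
d(f(x),f(y))\le d(x,y)\le r,
\]
so $f(y)\in\B{f(x)}{r}$. As $y$ was arbitrary, this gives $f[\B x r]\subset\B{f(x)}{r}$.

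Second, for the cardinality statement, note that injectivity of $f$ makes its restriction to $\B x r$ a bijection onto the image $f[\B x r]$. Hence
\[
|\B x r|=|f[\B x r]|\le|\B{f(x)}{r}|,
\]
where the inequality comes from the inclusion just proved. This is a comparison of cardinal numbers, so it holds even if the balls are infinite. In the later applications the balls will be finite, and there the inequality is simply one of natural numbers.

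There is no real obstacle here. The only point that needs care is that injectivity is required for the cardinality bound but not for the inclusion; without it a nonexpansive map could collapse a ball.
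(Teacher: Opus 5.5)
Your proof is correct: the inclusion follows directly from nonexpansiveness, and injectivity turns it into the cardinality bound (valid for arbitrary cardinals). The paper gives no argument of its own here and simply cites Lemma 3.1 of \cite{hida2026plasticity}, so your direct argument is the natural proof to supply.
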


\begin{proof}
See \cite[Lemma 3.1]{hida2026plasticity}.
\end{proof}

\begin{theorem}[\cite{freudenthal1936,naimpally2006plasticity}]\label{tb-plastic}
Every totally bounded metric space is plastic. In particular, every finite metric
space is plastic.
\end{theorem}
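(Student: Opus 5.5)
The plan is to prove the Freudenthal--Hurewicz statement directly: a nonexpansive bijection $f$ of a totally bounded space $(X,d)$ is an isometry. The finite case then follows, since a finite metric space is totally bounded. I would argue via recurrence of the iterates of $f$ in the compact space of nonexpansive maps on the completion.

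\textbf{Step 1: recurrence.} Fix $x,y\in X$ and $\varepsilon>0$. Choose a finite $\varepsilon$-net $\{z_1,\dots,z_m\}$ of $X$. Consider the tuples $\bigl(f^n(x),f^n(y)\bigr)_{n\ge0}$ in $X\times X$ with the max metric; this space is totally bounded. Hence there are $p<q$ with
\[
d(f^p(x),f^q(x))<\varepsilon \quad\text{and}\quad d(f^p(y),f^q(y))<\varepsilon .
\]
Put $n=q-p\ge1$. I would like to cancel $f^p$, but $f^{-1}$ need not be nonexpansive, so cancellation is not available. Instead, use the pigeonhole principle more cleverly, in the standard way: pass to the sequence of tuples $T_n=\bigl(f^n(x),f^n(y)\bigr)$ and extract a Cauchy subsequence $T_{n_j}$. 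Set $k_j=n_{j+1}-n_j$. Nonexpansiveness of $f^{n_j}$ gives
\[
d\bigl(x,f^{k_j}(x)\bigr)\ \ge\ d\bigl(f^{n_j}(x),f^{n_{j+1}}(x)\bigr)\to0 ,
\]
but this inequality points the wrong way.

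\textbf{Step 1, repaired.} The correct trick works on the image side. Let $u=f^{-1}$ (not necessarily continuous). Consider the points $a_n=u^n(x)$ and $b_n=u^n(y)$. By total boundedness, choose $n<m$ with $d(a_n,a_m)<\varepsilon$ and $d(b_n,b_m)<\varepsilon$. Applying the nonexpansive map $f^n$ gives
\[
d\bigl(x,f^{n}(a_m)\bigr)\le d(a_n,a_m)<\varepsilon ,
\]
and $f^n(a_m)=u^{m-n}(x)$. So with $k=m-n$, both $d\bigl(x,u^k(x)\bigr)<\varepsilon$ and $d\bigl(y,u^k(y)\bigr)<\varepsilon$.

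\textbf{Step 2: conclusion.} Since $f^k$ is nonexpansive,
\[
d\bigl(f(x),f(y)\bigr)\ \ge\ d\bigl(f^k(x'),f^k(y')\bigr) \quad\text{for } x'=u^{k-1}(x),\ y'=u^{k-1}(y),
\]
because $f^{k-1}$ maps $f(x),f(y)$ to $f^k(x),f^k(y)$. Better: directly,
\[
d(x,y)\le d\bigl(x,u^k(x)\bigr)+d\bigl(u^k(x),u^k(y)\bigr)+d\bigl(u^k(y),y\bigr)<2\varepsilon+d\bigl(u^k(x),u^k(y)\bigr).
\]
Moreover $d\bigl(u^k(x),u^k(y)\bigr)$ is at most $d\bigl(u(x),u(y)\bigr)$, since $f^{k-1}$ is nonexpansive and maps $u^k(x)$ to $u(x)$; this requires $k\ge1$. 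Letting $\varepsilon\to0$ yields $d(x,y)\le d\bigl(u(x),u(y)\bigr)$ for all $x,y$. Replacing $x,y$ by $f(x),f(y)$ gives $d\bigl(f(x),f(y)\bigr)\le d(x,y)\le d\bigl(f(x),f(y)\bigr)$, so $f$ is an isometry.

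\textbf{Main obstacle.} The main obstacle is exactly the direction issue seen in Step 1: recurrence must be taken along backward orbits, so that nonexpansiveness of forward iterates transports smallness back to $x$. The finite case is immediate.
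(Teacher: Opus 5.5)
The paper gives no proof of this statement; it cites Freudenthal--Hurewicz and Naimpally--Piotrowski--Wingler, so your argument has to stand on its own. Your repaired Step~1 is correct and is the right key idea. With $u=f^{-1}$, total boundedness of $X\times X$ gives some $k\ge1$ with $d(x,u^k(x))<\varepsilon$ and $d(y,u^k(y))<\varepsilon$. Step~2, however, has a genuine gap. Both of its inequalities point the wrong way, and as written it never proves the nontrivial inequality $d(x,y)\le d(f(x),f(y))$.

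Apply nonexpansiveness of $f^{k-1}$ to $u^k(x)$ and $u^k(y)$:
\[
d\bigl(u(x),u(y)\bigr)=d\bigl(f^{k-1}(u^k(x)),f^{k-1}(u^k(y))\bigr)\le d\bigl(u^k(x),u^k(y)\bigr).
\]
This is the reverse of what you claim. In fact $u$ never decreases distances, since $d(a,b)=d\bigl(f(u(a)),f(u(b))\bigr)\le d\bigl(u(a),u(b)\bigr)$. So the inequality you reach, $d(x,y)\le d\bigl(u(x),u(y)\bigr)$, is just nonexpansiveness of $f$ restated. Substituting $f(x),f(y)$ only returns $d(f(x),f(y))\le d(x,y)$, and the second inequality $d(x,y)\le d(f(x),f(y))$ in your final chain has no support.

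The fix is to use the triangle inequality in the other direction, bounding the backward orbit from above:
\[
d\bigl(u(x),u(y)\bigr)\le d\bigl(u^k(x),u^k(y)\bigr)\le d\bigl(u^k(x),x\bigr)+d(x,y)+d\bigl(y,u^k(y)\bigr)<d(x,y)+2\varepsilon .
\]
Letting $\varepsilon\to0$ gives $d(u(x),u(y))\le d(x,y)$. Applying this to $f(x),f(y)$ gives $d(x,y)\le d(f(x),f(y))$, and together with nonexpansiveness this shows $f$ is an isometry. You should also delete the abandoned first version of Step~1, the discarded first display of Step~2, and the unused reference to the completion.
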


\begin{proposition}\label{hyper-total-bound}
Let $X$ be a totally bounded metric space. Then $\K X$ is plastic.
\end{proposition}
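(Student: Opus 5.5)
The plan is to show that $\K X$ is itself totally bounded and then apply Theorem~\ref{tb-plastic}. So the whole argument reduces to the classical fact that the hyperspace of a totally bounded space is totally bounded in the Hausdorff metric. I would prove this directly with finite nets, without passing to completions.

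Fix $\varepsilon>0$. By total boundedness of $X$ we choose a finite $\varepsilon$-net $F\subset X$, meaning that every $x\in X$ satisfies $d(x,F)<\varepsilon$. The candidate $\varepsilon$-net in $\K X$ is the finite family $\mathcal F=\{E: \emptyset\ne E\subseteq F\}$. Each such $E$ is finite, hence compact, so $\mathcal F\subseteq\K X$.

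Given $A\in\K X$, set $E_A=\{p\in F: d(p,A)<\varepsilon\}$. This set is nonempty: pick any $a\in A$ and a point $p\in F$ with $d(a,p)<\varepsilon$; then $p\in E_A$. We then check both halves of the Hausdorff distance.
\begin{itemize}
\item For each $a\in A$, the point $p\in F$ closest to $a$ has $d(a,p)<\varepsilon$, so $p\in E_A$ and hence $d(a,E_A)<\varepsilon$.
\item For each $p\in E_A$, we have $d(p,A)<\varepsilon$ by the definition of $E_A$.
\end{itemize}
Since $A$ and $E_A$ are compact (or simply because $E_A$ is finite and the suprema are attained), this gives $\dH(A,E_A)\le\varepsilon$. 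Hence $\mathcal F$ is a finite $\varepsilon$-net of $\K X$, so $\K X$ is totally bounded, and Theorem~\ref{tb-plastic} yields that $\K X$ is plastic.

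There is no real obstacle here. The only care needed is to ensure that $E_A$ is nonempty, so that it lies in $\K X$, and that the strict inequalities give $\dH\le\varepsilon$. Since $\varepsilon$ is arbitrary, the distinction between $\le\varepsilon$ and $<\varepsilon$ does not matter.
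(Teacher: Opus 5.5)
Your proof is correct. The finite family of nonempty subsets of a finite $\varepsilon$-net of $X$ is an $\varepsilon$-net of $\K X$, so $\K X$ is totally bounded and Theorem~\ref{tb-plastic} gives plasticity. The paper gives no argument of its own and simply cites \cite[Proposition 2.5]{hida2026plasticity}; your reduction to total boundedness of the hyperspace followed by Theorem~\ref{tb-plastic} is the natural route, and it is almost certainly what that citation does.
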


\begin{proof}
See \cite[Proposition 2.5]{hida2026plasticity}.
\end{proof}

\begin{lemma}\label{lem:hyperball}
Let $X$ be a metric space, let $\K X$ be its hyperspace, and let $A\in\K X$ and $n>0$. Then
for $U\in\K X$,
\[
\dH(A,U)\le n
\quad\Longleftrightarrow\quad
U\subseteq\bigcup_{a\in A}\B{a}{n}
\ \ \text{ and }\ \
U\cap\B{a}{n}\ne\emptyset\ \text{ for every }a\in A .
\]
\end{lemma}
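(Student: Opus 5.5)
The plan is to unfold the definition of the Hausdorff metric and prove each implication separately. One fact is used throughout: $d(a,U)\le n$ holds if and only if $U\cap\B{a}{n}\ne\emptyset$, because $U$ is compact. Indeed, for fixed $a$ the function $u\mapsto d(a,u)$ is continuous on the compact set $U$, so the infimum $d(a,U)$ is attained at some $u\in U$. Note that no such attainment is needed for the other direction, $d(u,A)\le n \Rightarrow u\in\bigcup_{a\in A}\B{a}{n}$, since the same compactness argument applies to $A$.

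For the forward implication, assume $\dH(A,U)\le n$. Then $\sup_{u\in U}d(u,A)\le n$, so $d(u,A)\le n$ for each $u\in U$. Since $A$ is compact, this distance is attained at some $a\in A$ with $d(u,a)\le n$, so $u\in\B{a}{n}$. This gives the containment $U\subseteq\bigcup_{a\in A}\B{a}{n}$. Similarly, $\sup_{a\in A}d(a,U)\le n$ gives $d(a,U)\le n$ for each $a\in A$, and attainment on the compact set $U$ yields a point $u\in U\cap\B{a}{n}$.

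For the reverse implication, assume both conditions hold. Each $u\in U$ lies in some $\B{a}{n}$ with $a\in A$, so $d(u,A)\le d(u,a)\le n$, and taking the supremum over $u$ gives $\sup_{u\in U}d(u,A)\le n$. For each $a\in A$, pick $u\in U\cap\B{a}{n}$; then $d(a,U)\le n$, so $\sup_{a\in A}d(a,U)\le n$. Taking the maximum of the two suprema gives $\dH(A,U)\le n$.

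There is no real obstacle here. The only subtle point is that the balls are closed and the inequality is non-strict, so attainment of the infimum is essential in the forward direction. This is exactly where compactness of $A$ and $U$ enters; for non-compact sets the forward implication could fail.
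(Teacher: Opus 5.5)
Your proof is correct and follows the same route as the paper: both unfold the two suprema in the definition of $\dH$. You additionally make explicit the compactness argument (attainment of $d(u,A)$ and $d(a,U)$) that the paper's ``says exactly'' uses only implicitly. One wording slip: your opening claim that ``no such attainment is needed'' for $d(u,A)\le n\Rightarrow u\in\bigcup_{a\in A}\B{a}{n}$ is inaccurate, since attainment on the compact set $A$ is needed there, and your forward implication does in fact use it correctly.
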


\begin{proof}
$\sup_{u\in U}d(u,A)\le n$ says exactly that every $u\in U$ is within $n$ of some
$a\in A$, i.e.\ $U\subseteq\bigcup_{a}\B{a}{n}$; and $\sup_{a\in A}d(a,U)\le n$
says that every $a\in A$ has some $u\in U$ with $d(a,u)\le n$, i.e.\
$U\cap\B{a}{n}\ne\emptyset$.
\end{proof}

\subsection{Plasticity is not preserved by the hyperspace
construction}\label{sec:hyperspace-fails}

In this section, we provide a counterexample to \cite[Question 2.4]{hida2026plasticity}.

\begin{definition}\label{def:Xm}
Let $X_2=\NN\times\{0,1\}$, and for $(n,i),(n',j)\in X_2$ put
\[
d_2\bigl((n,i),(n',j)\bigr)=
\begin{cases}
0,&(n,i)=(n',j),\\
1,&(n,i)\ne(n',j)\ \text{ and }\ n=n',\\
2,&n\ne n'.
\end{cases}
\]
We call the sets $C_n=\{n\}\times\{0,1\}$ the \emph{clusters} of $X_2$, and write
$\pi(x)=n$ when $x\in C_n$; for $A\in\K{X_2}$ we write
$\pi[A]=\{n\in\NN:A\cap C_n\ne\emptyset\}$.
\end{definition}

\begin{lemma}\label{lem:cluster-maps}
Let $f\colon X_2\to X_2$ be a bijection. Then $f$ is nonexpansive if and only if it
maps each cluster \emph{into} a cluster, and $f$ is an isometry if and only if it
maps each cluster \emph{onto} a cluster.
\end{lemma}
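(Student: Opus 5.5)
The plan is to translate both conditions on $f$ into statements about pairs of points. In $X_2$ a distance is strictly less than $2$ exactly when the two points are distinct and lie in the same cluster, or are equal. So for distinct $x,y$ we have $d_2(x,y)\le 1$ if and only if $\pi(x)=\pi(y)$. Throughout, $f$ is a bijection, so distinct points have distinct images, and the value $0$ plays no role.

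\emph{Nonexpansiveness.} Suppose $f$ is nonexpansive and take the two distinct points $x,y$ of a cluster $C_n$. Then $d_2(f(x),f(y))\le 1$, and the images are distinct, so $\pi(f(x))=\pi(f(y))$. Hence $f[C_n]$ lies in a single cluster. Conversely, suppose $f$ maps each cluster into a cluster. Take distinct $x,y$. If $\pi(x)=\pi(y)$, then $f(x)$ and $f(y)$ are distinct points of one cluster, so their distance is $1=d_2(x,y)$. If $\pi(x)\ne\pi(y)$, then $d_2(x,y)=2$, which is the diameter of $X_2$, so the inequality holds trivially. Thus $f$ is nonexpansive.

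\emph{Isometry.} Suppose $f$ maps each cluster onto a cluster. It is nonexpansive by the first part, and it preserves distance $1$. It remains to check that distance $2$ is preserved. Take $x\in C_n$ and $y\in C_m$ with $n\ne m$. If $f(x)$ and $f(y)$ were in the same cluster $C_p$, then, since $f[C_n]=C_p$, the point $f(y)\in C_p$ would equal $f(z)$ for some $z\in C_n$. Injectivity then gives $y=z\in C_n$, a contradiction. Conversely, suppose $f$ is an isometry. It is nonexpansive, so each cluster maps into a cluster. Moreover $f[C_n]$ has two elements because $f$ is injective, and every cluster has exactly two elements, so the image is the whole cluster. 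Hence $f$ maps each cluster onto a cluster.

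No step is a real obstacle. The only point needing care is the use of bijectivity: injectivity gives that an image of a cluster lying in a cluster is all of it, and it rules out points from different clusters landing in a common cluster. Notably, "into" does not imply "onto" for the cluster map $n\mapsto\pi(f[C_n])$. Points from different clusters may be sent into the same cluster, which is exactly what makes $f$ nonexpansive without being an isometry.
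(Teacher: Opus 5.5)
Your proof is correct and is the detailed version of the paper's one-line argument, which appeals directly to the definition of $d_2$ and to the fact that the clusters partition $X_2$. Drop the closing remark, though: it is false for $X_2$, because your own cardinality argument shows that an injective map sending each two-point cluster into a cluster sends it onto that cluster, so two distinct clusters can never land in the same one (this is why $X_2$ is plastic, Proposition~\ref{prop:Xm-plastic}); the collapsing you describe occurs only in $\K{X_2}$, whose clusters $K_F$ have different sizes (Theorem~\ref{counterexample}).
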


\begin{proof}
The proof follows directly from the definition of the metric $d_2$ and the fact that $(C_n)_n$ forms a partition of $X_2$.
\end{proof}

\begin{proposition}\label{prop:Xm-plastic}
The space $(X_2,d_2)$ is plastic.
\end{proposition}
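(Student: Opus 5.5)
By Lemma~\ref{lem:cluster-maps}, it suffices to show that every nonexpansive bijection $f\colon X_2\to X_2$ maps each cluster \emph{onto} a cluster. Suppose $f$ is such a bijection. By the lemma, $f$ maps each cluster $C_n$ into some cluster $C_{\sigma(n)}$, which defines a map $\sigma\colon\NN\to\NN$. Surjectivity of $f$ forces $\sigma$ to be surjective, since every point of a cluster $C_m$ has a preimage lying in some $C_n$ with $\sigma(n)=m$.

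The key step is a counting argument. Each cluster has exactly two points and $f$ is injective, so at most two points can land in $C_m$. If $f$ fails to map some $C_n$ onto its target, then $f[C_n]$ must be a single point, because $f$ is injective and $|C_n|=2$. But injectivity of $f$ on $C_n$ already gives $|f[C_n]|=2$, and $f[C_n]\subseteq C_{\sigma(n)}$ with $|C_{\sigma(n)}|=2$. Hence $f[C_n]=C_{\sigma(n)}$ for every $n$.

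So $f$ maps each cluster onto a cluster, and by Lemma~\ref{lem:cluster-maps} it is an isometry. Concretely, $\sigma$ is then injective, because two clusters mapped onto the same cluster would contradict injectivity of $f$, and $d_2(f(x),f(y))=d_2(x,y)$ follows case by case.

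There is no real obstacle here. The only point needing care is the justification of Lemma~\ref{lem:cluster-maps} in the nonexpansive direction: points at distance $1$ must map to points at distance $\le1$, hence into the same cluster, since distinct images lie at distance $1$ or $2$. The cardinality observation $|f[C_n]|=|C_n|=2$ then closes the argument immediately.
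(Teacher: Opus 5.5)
Your proof is correct and follows the paper's argument: Lemma~\ref{lem:cluster-maps} puts $f[C_n]$ inside a cluster $C_{\sigma(n)}$, injectivity together with $|C_n|=|C_{\sigma(n)}|=2$ forces $f[C_n]=C_{\sigma(n)}$, and the lemma again shows that $f$ is an isometry. The surjectivity of $\sigma$ and the contradiction framing in your second paragraph are unnecessary, since the direct cardinality count already suffices, but they do no harm.
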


\begin{proof}
Let $f:X_2\to X_2$ be a nonexpansive bijection. By Lemma~\ref{lem:cluster-maps}, $f$
maps each cluster $C_n$ into a cluster $C_{\sigma(n)}$. Since $|C_n|=|C_{\sigma(n)}|
=2$ and $f$ is injective, we conclude that $f[C_n]=C_{\sigma(n)}$. By Lemma~\ref{lem:cluster-maps} again, $f$ is an isometry.
\end{proof}

\begin{theorem}\label{counterexample}
The space $\K{X_2}$ is not plastic. 
\end{theorem}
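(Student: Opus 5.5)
The plan is to make the metric of $\K{X_2}$ completely explicit, notice that it has the same ``cluster'' form as $X_2$ but with clusters of different finite sizes, and then build a bijection that merges several small clusters into one larger cluster.

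\emph{Step 1: describe $d_H$.} Since $X_2$ is uniformly discrete, its compact subsets are exactly its finite nonempty subsets. Take $A,B\in\K{X_2}$. For every point $a$ we have $d(a,B)\le 1$ if $B$ meets the cluster of $a$, and $d(a,B)=2$ otherwise. By Lemma~\ref{lem:hyperball} this gives
\[
d_H(A,B)\le 1 \iff \pi[A]=\pi[B].
\]
Hence $d_H$ takes only the values $0,1,2$. Distinct sets lie at distance $1$ exactly when they have the same projection, and at distance $2$ otherwise.

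So $\K{X_2}$ is partitioned into the ``fibres''
\[
\mathcal F_S=\{A:\pi[A]=S\},
\]
where $S$ ranges over the finite nonempty subsets of $\NN$. Each fibre has $|\mathcal F_S|=3^{|S|}$ elements, because on each cluster $C_n$ with $n\in S$ the set $A$ meets $C_n$ in one of $3$ nonempty subsets. Distinct points in one fibre are at distance $1$, and points in different fibres are at distance $2$.

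\emph{Step 2: reduce to fibres.} Exactly as in Lemma~\ref{lem:cluster-maps}, a bijection $h$ of $\K{X_2}$ is nonexpansive if and only if it maps each fibre into some fibre. It is an isometry only if it maps fibres onto fibres.

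\emph{Step 3: construct the map.} Enumerate the fibres of size $3$ as $F_1,F_2,\dots$; these are the fibres $\mathcal F_{\{n\}}$, and there are countably many of them. Enumerate the fibres of size $9$ as $E_1,E_2,\dots$; these are the fibres $\mathcal F_S$ with $|S|=2$, again countably many. Define $h$ as follows:
\begin{itemize}
\item $h$ maps $F_1\cup F_2\cup F_3$ bijectively onto $E_1$, which works since $3+3+3=9$;
\item $h$ maps $F_{n+3}$ bijectively onto $F_n$ for every $n\ge1$;
\item $h$ maps $E_j$ bijectively onto $E_{j+1}$ for every $j\ge1$;
\item $h$ is the identity on all remaining fibres.
\end{itemize}

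Every element of $\K{X_2}$ is hit exactly once, so $h$ is a bijection. Each fibre is mapped into a single fibre, so $h$ is nonexpansive by Step 2. However, take $A\in F_1$ and $B\in F_2$. Then $d_H(A,B)=2$, but $h(A),h(B)\in E_1$, so $d_H(h(A),h(B))=1$. Thus $h$ is not an isometry, and $\K{X_2}$ is not plastic.

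The only real work is Step 1, namely checking carefully that Hausdorff distance at most $1$ is equivalent to equal projections. Once that is done, the construction in Step 3 is routine bookkeeping, and it succeeds because fibres of different sizes each occur infinitely often.
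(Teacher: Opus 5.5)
Your proposal is correct and matches the paper's proof: both compute $d_H$ in terms of the projections $\pi[A]$, partition $\K{X_2}$ into fibres of size $3^{|S|}$, and merge three size-$3$ fibres onto one size-$9$ fibre, using shifts on the remaining size-$3$ and size-$9$ fibres to keep the map bijective. Your explicit justification of $d_H(A,B)\le 1 \iff \pi[A]=\pi[B]$ via Lemma~\ref{lem:hyperball} is a small addition that the paper leaves implicit.
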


\begin{proof}
For a finite nonempty set $F\subset\NN$, put $K_F=\{A\in\K{X_2}:\pi[A]=F\}$, so that
$\K{X_2}$ is the disjoint union of the sets $K_F$. The definition of $d_2$ gives the following description of the Hausdorff metric:

\[
d_H\bigl(A,B\bigr)=
\begin{cases}
0,&A=B,\\
1,& A\ne B\ \text{ and }\ \pi[A]=\pi[B],\\
2,& \pi[A]\ne\pi[B].
\end{cases}
\]

Thus, $\K{X_2}$ has the same metric pattern as $X_2$: it is partitioned into the clusters
$K_F$, with two distinct points in the same cluster at distance $1$ and points in
distinct clusters at distance $2$. In particular, a bijection of $\K{X_2}$ is
nonexpansive if and only if it maps each $K_F$ into some $K_{F'}$, and it is an
isometry if and only if it maps each $K_F$ onto some $K_{F'}$. Observe that, unlike in $X_2$, where all clusters have the same cardinality, the clusters $K_F$ may have different cardinalities. This variation in cardinality allows us to construct a nonexpansive bijection that is not an isometry. 

Enumerate as $D_0,D_1,D_2,\dots$ the clusters $K_F$ with $|F|=1$, and enumerate
$E_0,E_1,\dots$ those with $|F|=2$; thus $|D_i|=3$ and $|E_j|=9$ for all $i,j$.
We define $f\colon\K{X_2}\to\K{X_2}$ as follows:

\begin{enumerate}
\item Define $f$ so that it maps $D_0\cup D_1\cup D_2$
bijectively onto $E_0$, and maps $D_i$ onto $D_{i-3}$ for every $i\geq 3$.
\item $f$ maps $E_j$ onto $E_{j+1}$ for every $j\geq0$.
\item $f$ is the identity in all remaining cases.
\end{enumerate}
The images in these three cases are pairwise disjoint and cover the whole hyperspace. Moreover, each cluster is mapped into a cluster. Thus $f$ is a nonexpansive bijection, but it is not an isometry. Indeed, if $A\in D_0$ and $B\in D_1$, then $d_H(A,B) = 2$, but $d_H(f(A), f(B)) = 1$, because $f(A), f(B)\in E_0$ are in the same cluster.
\end{proof}

\section{Graphs and graph metric spaces}\label{sec:graphs}

We use standard notation from graph theory. In particular, for a graph $G=(V,E)$, we write $u\sim v$ when the vertices $u$ and $v$ are adjacent, and $\deg(v)$ for the degree of a vertex $v$. We say that $G$ is connected if any two distinct vertices can be joined by a walk. The graph is \emph{locally finite} if $\deg(v)<\infty$ for every $v$, and
\emph{$k$-regular} if $\deg(v)=k$ for every $v$. A vertex of degree $k$ has a closed unit ball with $k+1$ vertices. We refer to \cite{Diestel,BondyMurty,ChartrandZhang} for additional notation and background on graph metric spaces.

\begin{definition}\label{def:graph}
Let $G=(V,E)$ be a connected graph. For $u,v\in V$ set
\[
d(u,v)=\min\{\,n\ :\ \text{there is a walk of length } n \text{ from } u \text{ to } v\,\}.
\]
Then $d$ is a metric on $V$, called the \emph{path metric}, and the pair $(V,d)$ is
called a \emph{graph metric space}. Note that $d(u,v)=1$ if and only if $u\sim v$, so
that the graph is recovered from the metric.  A walk of minimal length realizing
$d(u,v)$ is automatically a path, and we call it a \emph{geodesic} from $u$ to $v$.

If each edge $e\in E$ carries a weight $\ell(e)>0$ (a weighted graph), the
infimum of the sums of edge weights over walks joining two vertices defines
the \emph{weighted path distance}. Whenever distinct vertices have positive
distance, this is a metric, called the \emph{weighted path metric}, and
$(V,d)$ is then called a \emph{weighted graph metric space}. A common positive
lower bound for the edge weights is sufficient for this nondegeneracy. 
\end{definition}

Not every graph metric space is plastic, as the following examples illustrate.

\begin{example}[{cf.\ \cite[Example 3.2]{naimpally2006plasticity}}]\label{ex:weighted}
Let $X=\{z\in\Z:z<0\}\cup\{2n:n\in\NN\}$, regarded as a metric subspace of $\Z$, and let
$f\colon X\to X$ send each element to its predecessor in the natural ordering of
$X$. Then $f$ is a nonexpansive bijection and $d(f(0),f(2))=1<2=d(0,2)$, so $X$ is
not plastic. Note that $X$ is a weighted, 2-regular graph metric space.
\end{example}

In view of this example, we restrict our attention to graphs that are simple,
undirected, connected, and unweighted. 
In this setting, the path metric takes only integer values, and the space is discrete. In particular
\[
\K V=\{A\subset V: A \text{ is finite and nonempty}\}.
\]

\begin{lemma}\label{countable}
Let $G=(V,E)$ be a connected, locally finite graph. Then
every ball $\B v n$ is finite, and $V$ is countable. 
\end{lemma}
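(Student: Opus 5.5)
Both claims follow from an induction on the radius $n$ of the ball. For finiteness of $\B v n$, we argue by induction on $n\ge 0$. For $n=0$ the ball is $\{v\}$. For $n=1$ it is $\{v\}$ together with the neighbours of $v$, so it has $\deg(v)+1<\infty$ elements by local finiteness.

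For the inductive step, assume $\B v n$ is finite. We claim that
\[
\B v{n+1}=\B v n\cup\bigcup_{u\in \B v n}\B u1 .
\]
The inclusion $\supseteq$ is the triangle inequality. For $\subseteq$, take $w$ with $d(v,w)=n+1$ and fix a geodesic from $v$ to $w$. Its penultimate vertex $u$ satisfies $d(v,u)=n$ and $u\sim w$, so $w\in\B u1$. The right-hand side is a finite union of finite sets, by the inductive hypothesis and local finiteness, and so $\B v{n+1}$ is finite. Non-integer radii reduce to the integer case, because the path metric is integer-valued: $\B v r=\B v{\lfloor r\rfloor}$.

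For countability, fix any vertex $v_0$. Since $G$ is connected, every vertex lies at some finite distance from $v_0$, so
\[
V=\bigcup_{n\in\NN}\B{v_0}n .
\]
This is a countable union of finite sets, hence $V$ is countable. If $V$ were empty there would be nothing to prove, but graph metric spaces are tacitly nonempty here.

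There is no real obstacle. The only point needing care is the geodesic step in the induction, which uses the fact that a walk of minimal length passes through a vertex at distance exactly $n$ just before reaching $w$. This holds because each prefix of a geodesic is itself a geodesic.
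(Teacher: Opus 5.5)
Your proof is correct and follows the paper's argument: induction on $n$ using $\B v{n+1}=\bigcup_{u\in\B v n}\B u1$ as a finite union of finite sets, then $V=\bigcup_{n}\B{v_0}n$. Your extra details (the geodesic justification for the inclusion, the redundant $\B v n\cup$ term, and the reduction of non-integer radii) are harmless additions to the same route.
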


\begin{proof}
Fix $v\in V$. We argue by induction on $n$ that $\B v n$ is finite. This is clear for
$n=0$, and $\B v{n+1}=\bigcup_{u\in \B v n}\B u1$ is a finite union of finite sets.
Since $G$ is connected, $V=\bigcup_{n\in\NN}\B v n$ is a countable union of finite
sets. 
\end{proof}

\begin{lemma}\label{isometry-automorphism}
Let $G=(V,E)$ be a connected graph and let $f\colon V\to V$ be a bijection. Then $f$
is an isometry if and only if
\[
u\sim v \iff f(u)\sim f(v)
\qquad\text{for all } u,v\in V .
\]
\end{lemma}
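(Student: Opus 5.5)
One direction is immediate. If $f$ is an isometry, then $d(u,v)=1$ if and only if $d(f(u),f(v))=1$. As noted after Definition~\ref{def:graph}, $d(u,v)=1$ holds exactly when $u\sim v$, so $f$ preserves adjacency in both directions.

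For the converse, I will assume that $f$ is a bijection satisfying $u\sim v\iff f(u)\sim f(v)$. The plan is to show $d(f(u),f(v))=d(u,v)$ by proving two inequalities, using walks.

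First, $d(f(u),f(v))\le d(u,v)$. Take a geodesic $u=x_0\sim x_1\sim\cdots\sim x_n=v$ with $n=d(u,v)$. The forward implication gives $f(x_{i-1})\sim f(x_i)$ for each $i$, so $f(x_0),\dots,f(x_n)$ is a walk of length $n$ from $f(u)$ to $f(v)$. Hence $d(f(u),f(v))\le n$.

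Second, the reverse inequality. Since $f$ is a bijection, $f^{-1}$ exists. The backward implication, applied to the vertices $f^{-1}(a)$ and $f^{-1}(b)$, says that $a\sim b$ implies $f^{-1}(a)\sim f^{-1}(b)$. So $f^{-1}$ also maps walks to walks, and the same argument gives $d(f^{-1}(a),f^{-1}(b))\le d(a,b)$. Taking $a=f(u)$ and $b=f(v)$ yields $d(u,v)\le d(f(u),f(v))$.

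Combining the two inequalities, $f$ is an isometry. The only point needing care is that connectedness makes all distances finite and realized by walks, which Definition~\ref{def:graph} already guarantees. I expect no real obstacle here; the key observation is that bijectivity lets one apply the forward argument to $f^{-1}$.
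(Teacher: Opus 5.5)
Your proof is correct and follows essentially the same route as the paper. The forward direction uses the fact that adjacency is exactly distance one. For the converse, both $f$ and $f^{-1}$ send walks to walks, which gives the two inequalities; you write out explicitly the inequalities that the paper leaves implicit.
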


\begin{proof}
If $f$ is an isometry, then $d(f(u),f(v))=d(u,v)$; in particular, $d(f(u),f(v))=1$ if and only if $d(u,v)=1$. Conversely, suppose $f$ preserves adjacency in both directions. Then $f$ sends paths to paths, and also, $f^{-1}$ sends paths to paths, from which we conclude that $f$ is an isometry.
\end{proof}

\begin{theorem}\label{Plasticity-regular-graph}
Let $G=(V,E)$ be a connected, locally finite, regular graph. Then $G$ is plastic.
\end{theorem}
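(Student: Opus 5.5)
To show that a nonexpansive bijection $f\colon V\to V$ is an isometry, I will use Lemma~\ref{isometry-automorphism}. It therefore suffices to prove that $u\sim v$ if and only if $f(u)\sim f(v)$. Nonexpansiveness already gives one direction: if $u\sim v$, then $d(f(u),f(v))\le 1$, and since $f$ is injective, $f(u)\sim f(v)$. The whole task is the converse, that $f$ creates no new adjacencies. Equivalently, $f^{-1}$ preserves adjacency.

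Let $G$ be $k$-regular. The key step is local. For each $v$, Lemma~\ref{general_fact_injectivity} with $r=1$ gives $f[\B v1]\subseteq\B{f(v)}1$. Both balls have exactly $k+1$ elements, because every vertex has degree $k$, and local finiteness makes them finite. Since $f$ is injective, the inclusion is an equality:
\[
f[\B v1]=\B{f(v)}1 .
\]
Now suppose $f(u)\sim f(v)$. Then $f(u)\in\B{f(v)}1=f[\B v1]$. So $f(u)=f(w)$ for some $w\in\B v1$, and injectivity gives $u=w$. Since $u\ne v$, this means $u\sim v$, and Lemma~\ref{isometry-automorphism} finishes the proof.

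No genuine obstacle arises here. The only point needing care is that the cardinality argument requires the unit balls to be finite and all of the same size. This is exactly where local finiteness and regularity are used, and it explains why Example~\ref{ex:weighted} fails: in the weighted setting, balls of radius $1$ do not have uniform cardinality. Connectedness enters only through Lemma~\ref{isometry-automorphism}, which upgrades preservation of adjacency to preservation of the path metric.
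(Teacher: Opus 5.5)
Your proposal is correct and follows essentially the same route as the paper. Lemma~\ref{general_fact_injectivity}, together with the equal finite cardinality $k+1$ of all unit balls and the injectivity of $f$, forces $f[\B v1]=\B{f(v)}1$; hence adjacency is preserved in both directions, and Lemma~\ref{isometry-automorphism} finishes the proof.
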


\begin{proof}
Let $f\colon V\to V$ be a nonexpansive bijection and let $v\in V$. By
Lemma~\ref{general_fact_injectivity}, $f[\B v 1]\subset \B {f(v)} 1$. Since $G$ is
regular, $|\B v 1|=|\B {f(v)}1|<\infty$, and injectivity of $f$ therefore implies
that $f$ maps $\B v 1$ bijectively onto $\B {f(v)} 1$. In particular, $u\sim v$ if and
only if $f(u)\sim f(v)$. By Lemma~\ref{isometry-automorphism}, $f$ is an isometry,
and therefore $G$ is plastic.
\end{proof}

\begin{definition}\label{def:tree}
A \emph{tree} is a connected acyclic graph. For $k\ge2$ we write $\Td$ for the (infinite) \emph{regular tree of degree $k$}, that is, the unique tree in which every vertex has exactly $k$ neighbours. 
\end{definition}
In a tree, there is a unique path between any two vertices, and this path is the unique geodesic. We denote the unique path between $u$ and $v$ by $[u,v]$. Observe also that $T_2$ is isometric to $\Z$.

\begin{proposition}\label{trees-plastic}
Let $T$ be a tree and let $f\colon T\to T$ be a nonexpansive bijection. Then $f$
carries geodesics to geodesics. Consequently $f$ is an isometry, and every tree is
plastic.
\end{proposition}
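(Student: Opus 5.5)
The plan is to work directly with the path $[u,v]$ and show that its image under $f$ is again a geodesic. We never need to compare cardinalities of balls, so the argument does not use local finiteness and applies to every tree.

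\emph{Step 1: adjacency is preserved.} Let $f\colon T\to T$ be a nonexpansive bijection. If $x\sim y$, then $d(f(x),f(y))\le 1$. Injectivity gives $f(x)\ne f(y)$, so $d(f(x),f(y))=1$, that is, $f(x)\sim f(y)$.

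\emph{Step 2: images of paths are walks without backtracking.} Take distinct $u,v$ with $[u,v]=(x_0,x_1,\dots,x_n)$, where $x_0=u$, $x_n=v$ and $n=d(u,v)$. By Step~1, the sequence $(f(x_0),\dots,f(x_n))$ is a walk of length $n$ in $T$. Since the $x_i$ are pairwise distinct and $f$ is injective, the vertices $f(x_0),\dots,f(x_n)$ are pairwise distinct. In particular this walk is already a path; at the very least it has no backtracking, i.e.\ $f(x_{i-1})\ne f(x_{i+1})$.

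\emph{Step 3: a path in a tree is the geodesic.} To keep the argument self-contained, I will show that a walk in a tree with no immediate reversal is a path. If some vertex repeated, choose a repetition $w_i=w_j$ with $j-i$ minimal. Then $w_i,\dots,w_{j-1}$ are distinct and $j-i\ge 3$: the case $j-i=1$ is excluded because $T$ has no loops, and $j-i=2$ is excluded by the no-backtracking condition. So $w_i,\dots,w_{j-1}$ form a cycle in $T$, a contradiction. Since a tree has a unique path between two vertices, and this path is the geodesic, the image walk is exactly $[f(u),f(v)]$. Hence $f$ carries geodesics to geodesics, and $d(f(u),f(v))=n=d(u,v)$.

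\emph{Conclusion.} Thus $f$ is distance preserving, so it is an isometry, and every tree is plastic. Alternatively, Step~3 already shows that $f^{-1}$ preserves adjacency, and one could conclude with Lemma~\ref{isometry-automorphism}.

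The only step needing care is Step~3, the passage from ``path'' to ``geodesic''. This is exactly where acyclicity enters: in a general graph the image path could be longer than the geodesic, and the conclusion can fail, as Example~\ref{ex:nonplastic} shows.
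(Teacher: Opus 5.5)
Your proof is correct and uses essentially the same approach as the paper: nonexpansiveness plus injectivity preserves adjacency, injectivity makes the image of $[u,v]$ a path of length $d(u,v)$, and in a tree every path is the unique geodesic. The paper wraps this in an induction on $d(u,v)$, which your direct argument shows is unnecessary, and your no-backtracking lemma in Step~3 is likewise redundant, since Step~2 already gives pairwise distinct image vertices.
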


\begin{proof}
Fix $u\in T$. We prove by induction on $n$ that for every $v\in\Sp u n$ the image
under $f$ of the geodesic from $u$ to $v$ is a geodesic from $f(u)$ to $f(v)$.

For $n=0$ there is nothing to prove. For $n=1$, the claim follows because $f$ is
nonexpansive and injective, so that $d(f(u),f(v))=1$.

Assume that the statement holds for $n$, and let $v\in\Sp u{n+1}$, with geodesic
$$u=v_0,v_1,\dots,v_n,v_{n+1}=v.$$ Since $T$ is a tree, it contains no cycles; therefore, $v_n\in\Sp u n$ and
$u=v_0,\dots,v_n$ is the geodesic from $u$ to $v_n$.  By the induction hypothesis
$f(v_0),\dots,f(v_n)$ is a geodesic from $f(u)$ to $f(v_n)$. Since $d(v_n,v_{n+1})=1$ and $f$ is nonexpansive and
injective, $d(f(v_n),f(v_{n+1}))=1$. Moreover, $f(v_{n+1})\notin\{f(v_0),\dots,f(v_n)\}$
because $f$ is injective. Hence
\[
f(v_0),f(v_1),\dots,f(v_n),f(v_{n+1})
\]
is a simple path in $T$. In a tree every simple path is the unique path between its
endpoints, hence a geodesic, and therefore $d(f(u),f(v))=n+1=d(u,v)$.
\end{proof}

\section{Plasticity of \texorpdfstring{$\K G$}{K(G)} for regular graphs}\label{sec:regular}

\subsection{Adjacent twins and minimum unit balls}\label{sec:twins-criterion}
If $G$ is finite, then both $G$ and $\K G$ are finite and hence plastic by Theorem~\ref{tb-plastic}. We therefore focus on infinite graph metric spaces. Finite subgraphs and finite metric subspaces will be used as auxiliary objects in the proofs. In particular, if $G$ is connected and regular, its degree is at least $2$.

\begin{definition}\label{def:adjacent-twins}
Let $(X,d)$ be a metric space. Two distinct points $u,v\in X$ are \emph{adjacent
twins} if $\B u1=\B v1$.
\end{definition}

For a graph $G$, the relation  $u\equiv v\Longleftrightarrow \B u1=\B v1$
is an equivalence relation. Its classes are
called \emph{adjacent-twin classes}, or simply \emph{twin classes}.
Each class is a clique and is finite when $G$ is locally finite. The name twins comes from the fact that adjacent twins are not detectable by the metric in the following sense:

\begin{lemma}\label{lem:twins-equiv}
Let $G$ be a connected graph and let $u\ne v$ be vertices. Then $\B u1=\B v1$ if and
only if $u\sim v$ and $d(u,z)=d(v,z)$ for every vertex $z\notin\{u,v\}$.
\end{lemma}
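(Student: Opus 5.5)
The plan is to prove the two implications separately, using only the definition of the path metric and the triangle inequality.

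For the forward direction, assume $\B u1=\B v1$ with $u\ne v$. Since $u\in\B u1=\B v1$, we have $d(u,v)\le 1$, and $u\ne v$ forces $d(u,v)=1$, that is, $u\sim v$. Now fix $z\notin\{u,v\}$ and show $d(v,z)\le d(u,z)$; the reverse inequality then follows by symmetry. Write $n=d(u,z)\ge1$ and take a geodesic $u=w_0,w_1,\dots,w_n=z$. The vertex $w_1$ lies in $\B u1=\B v1$, so $d(v,w_1)\le1$, and hence
\[
d(v,z)\le d(v,w_1)+d(w_1,z)\le 1+(n-1)=n=d(u,z).
\]

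For the converse, assume $u\sim v$ and $d(u,z)=d(v,z)$ for every $z\notin\{u,v\}$. Take $w\in\B u1$. If $w\in\{u,v\}$, then $w\in\B v1$ because $d(u,v)=1$. Otherwise $d(v,w)=d(u,w)\le1$ by hypothesis, so again $w\in\B v1$. This gives $\B u1\subseteq\B v1$, and the opposite inclusion follows by exchanging the roles of $u$ and $v$.

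Every step is elementary, so I expect no real obstacle. The one point that needs care is the forward direction when $z$ is far from $u$. There the hypothesis on unit balls has to be turned into a statement about arbitrary distances, and the substitution of the first step of a geodesic from $u$ to $z$ is what does this. This step uses that $G$ is connected, so that $d(u,z)$ is finite and a geodesic exists.
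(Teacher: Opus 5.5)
Your proof is correct and follows the paper's argument: in the forward direction you both take a geodesic $u=w_0,w_1,\dots,w_n=z$ and replace $u$ by $v$ using $w_1\in\B v1$, and the converse is the same case check. Your triangle-inequality bound $d(v,z)\le d(v,w_1)+d(w_1,z)$ is slightly cleaner than the paper's, because it covers the case $w_1=v$ automatically, whereas the paper rules that case out separately before forming the path $v,w_1,\dots,w_n$.
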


\begin{proof}
Suppose $\B u1=\B v1$. Then clearly $u\sim v$. Let $z\notin\{u,v\}$ and
let $u=v_0,v_1,\dots,v_n=z$ be a geodesic. If $v_1=v$, then $u=v_0,v_2,\dots,v_n=z$ would be a path from $u$ to $z$ of length $n-1$, because $v_2\in\B v1=\B u1$, a contradiction. Therefore, every geodesic $u=v_0,v_1,\dots,v_n=z$ gives rise to the path $v,v_1,\dots,v_n=z$ from $v$ to $z$. This shows that $d(v,z)\leq d(u,z)$. By symmetry, we obtain the reverse inequality and conclude that $d(u,z) = d(v,z)$.

Conversely, if $u\sim v$ and $d(u,z)=d(v,z)$ for all $z\notin\{u,v\}$, then for
$z\notin\{u,v\}$ we get $z\in\B u1\iff z\in\B v1$, while $u,v\in\B u1\cap\B v1$
because $u\sim v$. Hence $\B u1=\B v1$.
\end{proof}

For a metric space $(X,d)$ and $F\subset X$, we write
\[
N[F]=\bigcup_{a\in F}\B a1 ,
\]
so that $N[\emptyset]=\emptyset$. We also write $N[a]=N[\{a\}] =\B a1$.

We now estimate a lower bound for the sizes of unit balls in $\K G$. The first estimate uses
dominating sets in the finite subgraph induced by $A$.

\begin{definition}\label{def:dom}
Let $G$ be a connected graph and $A\subset G$. A set $D\subseteq A$
\emph{dominates} $A$ if $\B a1\cap D\ne\emptyset$ for every $a\in A$. Define
$$\gamma(A) = \min\{|D|: D \textrm{ dominates } A\}.$$
\end{definition}

\begin{lemma}\label{lem:dom-bound}
Let $G$ be a connected, locally finite graph and let $A\in\K G$ be finite. Write
$q=|N[A]\setminus A|$. Then
\[
|\B A1|\ \ge\ 2^{\,|A|+q-\gamma(A)} .
\]
\end{lemma}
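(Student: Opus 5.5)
Fix a dominating set $D\subseteq A$ with $|D|=\gamma(A)$. The plan is to write down an explicit injective family of sets $U$ with $\dH(A,U)\le1$, indexed by the subsets of a set of size $|A|+q-\gamma(A)$. Take
\[
S=\bigl(A\setminus D\bigr)\cup\bigl(N[A]\setminus A\bigr),
\qquad |S|=|A|-\gamma(A)+q ,
\]
and for each $T\subseteq S$ put $U_T=D\cup T$. The sets $U_T$ are pairwise distinct because $D\cap S=\emptyset$. This gives $2^{|S|}$ distinct finite nonempty sets, provided $D\neq\emptyset$, which holds because $A\neq\emptyset$.

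The first check is that each $U_T$ lies in $\B A1$, using Lemma~\ref{lem:hyperball} with $n=1$. Every element of $U_T$ lies in $A\cup N[A]=N[A]=\bigcup_{a\in A}\B a1$, so $U_T\subseteq\bigcup_{a\in A}\B a1$. For every $a\in A$, the set $\B a1$ meets $D\subseteq U_T$ because $D$ dominates $A$. Hence $\dH(A,U_T)\le1$, and therefore $|\B A1|\ge 2^{|A|+q-\gamma(A)}$. Local finiteness is only needed to guarantee that $N[A]$ is finite, so that $q$ is a finite number and every $U_T$ is finite, hence belongs to $\K G$.

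There is no serious obstacle. The only point needing care is the definition of domination. As stated in the paper, $D\subseteq A$ and $\B a1\cap D\ne\emptyset$ for all $a\in A$, and this is exactly the second condition of Lemma~\ref{lem:hyperball}. One also has to confirm that the elements of $A\setminus D$ and of $N[A]\setminus A$ can be toggled freely without breaking either condition. Adding them keeps $U_T$ inside $N[A]$, and the condition $U_T\cap\B a1\neq\emptyset$ is already witnessed by $D$ alone.
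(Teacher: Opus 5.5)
Your proof is correct and is essentially the paper's argument: the paper also fixes a minimum dominating set $D$ and counts all sets $U$ with $D\subseteq U\subseteq N[A]$ via Lemma~\ref{lem:hyperball}. Your parametrization $U_T=D\cup T$ with $T\subseteq S=N[A]\setminus D$ is the same family, giving $2^{|N[A]|-|D|}=2^{|A|+q-\gamma(A)}$ sets.
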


\begin{proof}
Fix a dominating set $D\subseteq A$ with $|D|=\gamma(A)$. Suppose that $U\subset G$ satisfies
$D\subseteq U\subseteq N[A]$. Then $U$ is finite. Moreover, for every
$a\in A$, since $D$ dominates $A$, we have $\emptyset \neq D\cap\B a1 \subset U\cap\B a1$. By
Lemma~\ref{lem:hyperball}, this gives $U\in\B A1$. There are exactly $2^{\,|N[A]|-|D|}=2^{\,|A|+q-\gamma(A)}$ subsets $U$ such that $D\subseteq U\subseteq N[A]$.
\end{proof}

We use the following classical result on dominating sets.
\begin{proposition}[Ore's theorem]\label{ore}
Any undirected graph $G = (V,E)$ without isolated vertices\footnote{A vertex is isolated if it is not adjacent to any other vertex of $G$.} has a dominating set $D$ such that its complement $V\setminus D$ is also a dominating set. In particular, if $G$ has $n$ vertices, then it has a dominating set of at most $n/2$ vertices.
\end{proposition}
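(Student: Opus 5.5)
The plan is to pick a \emph{minimal} dominating set $D$ and show that minimality forces $V\setminus D$ to dominate as well. Here ``$D$ dominates $V$'' means, as in Definition~\ref{def:dom}, that $\B v1\cap D\neq\emptyset$ for every $v\in V$. The statement is only used for finite graphs (the subgraph induced by a finite $A$), so I would first do the finite case. There $V$ itself dominates, and among all dominating sets we can choose one, $D$, that is minimal under inclusion.

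Next I would check that $V\setminus D$ dominates. Every $v\in V\setminus D$ satisfies $v\in\B v1\cap(V\setminus D)$, so these vertices are fine. Suppose, for contradiction, that some $v\in D$ has $\B v1\cap(V\setminus D)=\emptyset$, so that all neighbours of $v$ lie in $D$. Since $v$ is not isolated, it has a neighbour $u\in D\setminus\{v\}$, and hence $v$ is still dominated by $D\setminus\{v\}$.

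Now take any other vertex $w\neq v$. If $w$ is not dominated by $D\setminus\{v\}$, then $\B w1\cap D=\{v\}$. In particular $w\sim v$, which puts $w\in D$ by our assumption on $v$. But then $w\in\B w1\cap(D\setminus\{v\})$, a contradiction. So $D\setminus\{v\}$ dominates $V$, which contradicts the minimality of $D$. Therefore $V\setminus D$ dominates. This step is the heart of the argument: it is exactly where the hypothesis of no isolated vertices enters, and it is the only point needing care.

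For the counting statement, note that $D$ and $V\setminus D$ are disjoint and together cover the $n$ vertices. Hence one of them has at most $n/2$ elements, and both are dominating sets.

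If one wants the infinite case too, the choice of a minimal dominating set could fail, so I would argue differently. Take a spanning forest of $G$, with every component having at least two vertices since there are no isolated vertices. Choose a root in each component and $2$-colour the vertices by the parity of their tree distance to the root. Every vertex has a tree neighbour, and that neighbour has the opposite colour. So each colour class is a dominating set, and the two classes are complements of each other. This alternative also gives the finite case directly.
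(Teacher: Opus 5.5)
Your proof is correct. Its main argument, that the complement of an inclusion-minimal dominating set dominates whenever there are no isolated vertices, is the classical proof of Ore's theorem; the paper gives no proof of its own and simply cites \cite[Theorem 13.1.3]{Ore}. Your spanning-forest $2$-colouring is also valid and covers infinite graphs as well, although the paper only applies the result to finite induced subgraphs.
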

\begin{proof}
See \cite[Theorem 13.1.3]{Ore}.
\end{proof}

\begin{proposition}\label{prop:large-A}
Let $G$ be a connected, locally finite graph of minimum degree at least $k$, where $k\geq 1$. If $A\in \K G$ and $|A|>2^{\,k}$, then $|\B A1|>2^{k+1}-1$.
\end{proposition}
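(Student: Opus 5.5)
The plan is to combine Lemma~\ref{lem:dom-bound} with Ore's theorem (Proposition~\ref{ore}) applied to the subgraph induced by $A$. Write $m=|A|>2^k$ and $q=|N[A]\setminus A|$. By Lemma~\ref{lem:dom-bound}, $|\B A1|\ge 2^{m+q-\gamma(A)}$, so it suffices to prove
\[
m+q-\gamma(A)\ \ge\ k+1,
\]
since then $|\B A1|\ge 2^{k+1}>2^{k+1}-1$.

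The key step is to bound $\gamma(A)$ by splitting $A$ into the vertices that are isolated in the induced subgraph $G[A]$ and the rest. Let $I\subseteq A$ be the set of vertices of $A$ with no neighbour in $A$, and let $A'=A\setminus I$. Each $a\in I$ must belong to every dominating set, because $\B a1\cap A=\{a\}$. The induced subgraph $G[A']$ has no isolated vertices, so Ore's theorem gives a set dominating $A'$ of size at most $|A'|/2$. Hence
\[
\gamma(A)\le |I|+|A'|/2,
\qquad\text{and so}\qquad
m-\gamma(A)\ge |A'|/2 .
\]

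Next I bound $q$ from below using the isolated vertices. Each $a\in I$ has at least $k$ neighbours, and all of them lie in $N[A]\setminus A$; thus $q\ge k$ whenever $I\neq\emptyset$. The argument then splits into two cases.

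\textbf{Case $I\ne\emptyset$.} Here $q\ge k$, and I need one more unit. If $A'\neq\emptyset$, then $|A'|\ge 2$ and $m-\gamma(A)\ge 1$, which suffices. If $A'=\emptyset$, then $A=I$ and $\gamma(A)=m$, so I need $q\ge k+1$. Suppose instead that $q=k$. Then every $a\in A$ has neighbourhood exactly the same $k$-set $S=N[A]\setminus A$. In that situation I would instead count directly, avoiding the dominating-set bound. A set $U\in\B A1$ can be any $U\subseteq A\cup S$ that meets $\B a1=\{a\}\cup S$ for each $a$. This includes every $U$ that contains a nonempty subset of $S$, together with an arbitrary subset of $A$, giving $(2^k-1)2^m$ sets, plus $U=A$ itself. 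Since $m\ge 2$, this is $>2^{k+1}-1$.

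\textbf{Case $I=\emptyset$.} Here $m-\gamma(A)\ge m/2>2^{k-1}$. For $k\ge 2$ we have $2^{k-1}\ge k$, so $m-\gamma(A)\ge k$ after integer rounding ($m/2>2^{k-1}$ forces $m-\gamma\ge 2^{k-1}+1\ge k+1$ when $m\ge 2^k+1$, since $m-\gamma\ge\lceil m/2\rceil$). For $k=1$, $m\ge3$ gives $m-\gamma\ge 2=k+1$. In general $\lceil (2^k+1)/2\rceil=2^{k-1}+1\ge k+1$ for all $k\ge1$, so the inequality holds even with $q=0$.

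I expect the main obstacle to be the degenerate subcase in which all vertices of $A$ are pairwise nonadjacent and share the same $k$ neighbours. There the dominating-set estimate is exactly tight at $2^{k}$, so I handle it by the direct count above. Everything else is routine bookkeeping with Lemma~\ref{lem:dom-bound} and Ore's theorem.
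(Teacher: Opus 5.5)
Your proof is correct and follows the paper's argument: the same split by the set $I$ of vertices isolated in the subgraph induced by $A$, Ore's theorem applied to $A\setminus I$, and Lemma~\ref{lem:dom-bound} in the cases $I=\emptyset$ and $\emptyset\ne I\ne A$. The only difference is the case $A=I$. The paper counts the sets $U\supseteq A$ together with the sets $(A\setminus\{a\})\cup T$ for every $q$. You instead use the domination bound $2^q$ when $q\ge k+1$ and count directly only in the case $q=k$, where every $a\in A$ has neighbourhood exactly $S$; both routes are valid.
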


\begin{proof}
Let $n=|A|$ and $q=|N[A]\setminus A|$, and let
$$I = \{a\in A: \B a1\cap A = \set{a}\}$$
be the set of isolated vertices of the subgraph induced by $A$. Set $t=|I|$. Since $2^{\,k}\ge2k$ for every
$k\ge 1$, we have $n=|A|\ge 2^{\,k}+1\ge2k+1$.

\emph{Case $t=0$.} In this case, the subgraph induced by $A$ has no isolated vertices. Then, by Proposition~\ref{ore}, $\gamma(A)\le n/2$ and hence $n-\gamma(A)\ge\lceil n/2\rceil\ge k+1$, where the last
inequality follows from $n\ge2k+1$. Lemma~\ref{lem:dom-bound} gives
$|\B A1|\ge2^{\,n + q-\gamma(A)} \ge2^{\,n-\gamma(A)}\ge2^{k+1}$.

\emph{Case $0<t<n$.} Then $I\neq\emptyset$ and $I\neq A$. Each $a\in I$ has at least $k$ neighbours outside $A$, so
$q \ge k$. Moreover, $n-t\ge2$. Indeed, for any $a\in A\setminus I$, the definition of $I$ gives $|\B a1\cap A|\ge2$ and $\B a1\cap A\subset A\setminus I$.

Consider now the subgraph $G'$ induced by $A\setminus I$. If $G'$ has an isolated vertex $v$, then every neighbour of $v$ lies outside $A$ or belongs to $I$. The latter is impossible by the definition of $I$. Hence every neighbour of $v$ lies outside $A$, so $v\in I$, a contradiction. This shows that $G'$ has no isolated vertex. By Proposition~\ref{ore}, $G'$ admits a dominating set $D'$ with $|D'|\le(n-t)/2$. Therefore,
$I\cup D'$ dominates $A$, so $\gamma(A)\le |I| +|D'|\le t+(n-t)/2$. Hence,
$$n-\gamma(A)\ge n -(t+(n-t)/2) \ge(n-t)/2\ge1.$$
Applying Lemma~\ref{lem:dom-bound}, we obtain
$$|\B A1|\ge2^{\,q+n-\gamma(A)}\ge2^{k+1}.$$

\emph{Case $t=n$.} In this case, every vertex of $A$ is isolated in the subgraph induced by $A$, and each $a\in A$ has at least $k$ neighbours outside $A$, so
$q \ge k$. In this case, we prove the inequality by defining some elements in the unit ball directly. 
\begin{enumerate}
\item Every set $U$ such that $A\subseteq U\subseteq N[A]$ is an element of $\B A1$ and there are $2^q$ such sets.
\item Define $B=N[A]\setminus A$. For each $a\in A$ and each $T\subseteq B$ with
$T\cap\B a1\ne\emptyset$, the set $V(a,T)=(A\setminus\{a\})\cup T$ belongs to $\B A1$. For each $a$, we have at least $2^q - 2^{q-k}$ subsets $T$ such that $T\subseteq B$ and $T\cap\B a1\ne\emptyset$. Observe that if $T\neq T'$, then $V(a,T)\neq V(a,T')$ and $V(a,T)\neq V(b,T')$ if $a\neq b$, because $b\in V(a,T)$, but $b\notin V(b,T)$. In particular, there are at least $n\bigl(2^{q}-2^{\,q-k}\bigr)$ sets of the form $V(a,T)$.
\end{enumerate}

Moreover, every set $U$ in the first family contains $A$, whereas no set $V(a,T')$ does. Hence the two families are disjoint. Therefore,
$$
|\B A1|\ \ge\ 2^{q}+n\bigl(2^{q}-2^{\,q-k}\bigr) = 2^{q}\bigl(1 + n\bigl(1-2^{\,-k}\bigr)\bigr)
\ \ge\ 2^{\,k}+n\bigl(2^{\,k}-1\bigr).
$$
Since $n\ge2k+1$, we have
$$|\B A1|\ \ge 2^{\,k}+n\bigl(2^{\,k}-1\bigr)\ge 2^{\,k}+(2k+1)\bigl(2^{\,k}-1\bigr) = (k+1)2^{k+1}-2k-1.
$$
Thus,
$$|\B A1| - (2^{k+1} - 1)\geq (k+1)2^{k+1}-2k-1 - 2^{k+1} + 1 = k2^{k+1}-2k = k(2^{k+1}-2)>0
$$
for $k\geq 1$. This shows that
$$|\B A1| >2^{k+1} - 1.$$
\end{proof}

\begin{theorem}[Exact minimum and equality cases]\label{thm:min-hyperballs}
Let $G$ be a connected, locally finite graph, with minimum degree $k\ge1$. Then
\[
\min_{A\in\K G}|\B A1|=2^{k+1}-1.
\]
Moreover,  $|\B A1|=2^{k+1}-1$ if and only if $A$ is a nonempty
subset of a twin class whose vertices have the minimum degree $k$.
\end{theorem}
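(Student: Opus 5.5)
The plan is to prove the upper bound and the equality direction by direct computation, and then show $|\B A1|>2^{k+1}-1$ for every other $A$.

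\emph{Equality sets.} Let $A$ be a nonempty subset of a twin class $C$ whose vertices have degree $k$. For every $a\in A$ we have $\B a1=\B c1$ for a fixed $c\in C$, so $N[A]=\B c1$ has exactly $k+1$ vertices. By Lemma~\ref{lem:hyperball}, $U\in\B A1$ if and only if $U\subseteq N[A]$ and $U$ meets $\B a1=N[A]$ for each $a\in A$. This just says that $U$ is a nonempty subset of $N[A]$. Hence $|\B A1|=2^{k+1}-1$, which shows the minimum is attained.

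\emph{Lower bound and strictness.} Let $A$ be any other set. If $|A|>2^k$, Proposition~\ref{prop:large-A} already gives strict inequality, so we may assume $A$ is finite and small. Set $m=|N[A]|$. The tool is the following family, built from a chosen vertex $a_0\in A$ and $A'=A\setminus\{a_0\}$:
\[
\mathcal F(a_0)=\{A'\cup T:\ T\subseteq N[A]\setminus A',\ T\cap\B{a_0}1\neq\emptyset\}.
\]
Every member contains $A'$ and meets $\B{a_0}1$, so $\mathcal F(a_0)\subseteq\B A1$ by Lemma~\ref{lem:hyperball}. Write $s=|\B{a_0}1\setminus A'|$. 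Then
\[
|\mathcal F(a_0)|=2^{m-|A|+1}-2^{m-|A|+1-s}.
\]
I would also add the sets $U\subseteq N[A]$ that omit part of $A'$ but are still dominated, in the spirit of Lemma~\ref{lem:dom-bound}. The aim is to show that the combined count is at least $2^{k+1}-1$, with equality only when $m=k+1$ and $N[A]=\B a1$ for every $a\in A$.

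\emph{Concluding the equality case.} Suppose $N[A]=\B a1$ for every $a\in A$ and $m=k+1$. Then any two vertices of $A$ are adjacent twins, and each has degree $m-1=k$, which is exactly the stated condition. Conversely, if $A$ is a singleton $\{a\}$, then $|\B A1|=2^{\deg a+1}-1$, and this equals $2^{k+1}-1$ only when $\deg a=k$. So it remains to rule out equality in two situations: $m\ge k+2$, or $\B a1\subsetneq N[A]$ for some $a\in A$.

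\emph{Main obstacle.} The hard part is this last strict inequality for small $A$ with $|A|\ge2$. Here I would split as in Proposition~\ref{prop:large-A}, according to whether $A$ has vertices that are isolated in its induced subgraph.

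If no vertex of $A$ is isolated in the induced subgraph, I would use Lemma~\ref{lem:dom-bound} together with Proposition~\ref{ore}. The goal is $|A|+q-\gamma(A)\ge k+1$, which gives $|\B A1|\ge2^{k+1}>2^{k+1}-1$, where $q=|N[A]\setminus A|$. To get this I would exploit $q\ge k+1-|A|$, which holds because each vertex has at least $k$ neighbours. The boundary case, where the estimate only yields $2^k$-type bounds, is exactly the twin clique, and there I would count directly.

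If $A$ has isolated vertices, take $a_0$ to be one of them. Then $\B{a_0}1\cap A'=\emptyset$, so $s\ge k+1$, and $\mathcal F(a_0)$ has at least $2^{m-|A|+1}(1-2^{-(k+1)})$ elements. I would then add the family $\{U: A\subseteq U\subseteq N[A]\}$, which is disjoint from $\mathcal F(a_0)$ after restricting $T$ to omit $a_0$ appropriately, to push the total strictly above $2^{k+1}-1$. I expect getting this bookkeeping tight enough in the mixed case (isolated and non-isolated vertices together, with $|A|\le2^k$) to be the most delicate step.
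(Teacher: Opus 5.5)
\textbf{There is a genuine gap.} It sits in the case where $A$ has no isolated vertices in its induced subgraph, $2\le|A|\le2^k$ and $m=|N[A]|\ge k+2$.

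Your plan there is to obtain $|A|+q-\gamma(A)\ge k+1$ from Lemma~\ref{lem:dom-bound}, Proposition~\ref{ore} and $q\ge k+1-|A|$. You treat the twin clique as the only boundary case. But $q\ge k+1-|A|$ only yields $|A|+q-\gamma(A)\ge k+1-\gamma(A)$, and the target inequality is false in general:
\begin{itemize}
\item For $G=C_4$ ($k=2$) and $A$ the whole vertex set, $q=0$ and $\gamma(A)=2$. So $|A|+q-\gamma(A)=2=k$, and the dominating-set bound gives only $4<7$, although $A$ is not a twin clique.
\item The same failure occurs in infinite graphs. Take a $4$-cycle $a,b,c,d$, a vertex $w$ adjacent to all four, and join $w$ to a vertex of a copy of $T_3$. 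Then $k=3$ and $A=\{a,b,c,d\}$ has $N[A]=A\cup\{w\}$ and $\gamma(A)=2$. Lemma~\ref{lem:dom-bound} gives only $8<15$.
\end{itemize}
Whenever $\gamma(A)\ge2$ and $m\le k+\gamma(A)$, the dominating-set estimate is at most $2^k$. Your ``additional dominated sets omitting part of $A'$'' are never specified, so this case is left open.

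\textbf{The missing idea is to count the complement rather than construct members.} For every $a\in A$, the ball $\B a1\subseteq N[A]$ has at least $k+1$ points, so at most $2^{m-k-1}$ subsets of $N[A]$ miss it. By Lemma~\ref{lem:hyperball} and a union bound,
\[
|\B A1|\ge 2^m-|A|\,2^{m-k-1}=2^{m-k-1}\bigl(2^{k+1}-|A|\bigr)\ge 2^{k+1}
\]
whenever $m\ge k+2$ and $|A|\le2^k$. Add the observation that $m=k+1$ forces $\B a1=N[A]$ for all $a\in A$. This is the whole proof, and no split according to isolated vertices is needed. It also shows that your second ``situation'', $\B a1\subsetneq N[A]$, is already contained in $m\ge k+2$.

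\textbf{A smaller bookkeeping point in your isolated-vertex case.} The family $\{U: A\subseteq U\subseteq N[A]\}$ is contained in $\mathcal F(a_0)$ (it consists of the sets with $a_0\in T$), not disjoint from it, so adding it gains nothing. What you actually need is:
\begin{itemize}
\item $|\mathcal F(a_0)|\ge2^{k+1}-1$ already holds there, since $q\ge k$ and $s\ge k+1$;
\item one extra member of $\B A1$ outside $\mathcal F(a_0)$, such as $(A\setminus\{a_1\})\cup\{w\}$ with $a_1\in A'$ and $w\sim a_1$, gives strictness.
\end{itemize}
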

\begin{proof}
Observe that, for every $u\in G$, $|\B u 1|\geq k+1$ with equality if and only if $u$ is a vertex with the smallest degree $k$. Let $p=|N[A]|\ge k+1$. If $p=k+1$, then each $N[a]\subseteq N[A]$
has at least $k+1$ elements, so $N[a]=N[A]$ for every $a\in A$.
Thus all vertices of $A$ belong to one twin class of degree
$k$, and Lemma~\ref{lem:hyperball} gives
$\B A1=\K{N[a]}$, of size $2^{k+1}-1$.

Suppose $p\ge k+2$. If $|A|>2^{k}$, use
Proposition~\ref{prop:large-A}. Otherwise a subset of $N[A]$ fails
to be in the ball only if it misses some $N[a]$. Each such event
has at most $2^{p-k-1}$ elements. Therefore
\[
|\B A1|\ge2^p-|A|2^{p-k-1}
=2^{p-k-1}(2^{k+1}-|A|)\ge2\cdot2^{k}=2^{k+1}.
\]
Conversely, if $A$ is a nonempty subset of a twin class of degree $k$,
then all $N[a]$, $a\in A$, coincide and have $k+1$ elements, so the first case
gives $|\B A1|=2^{k+1}-1$. This proves both the lower bound and its equality
characterization.
A singleton at a vertex of minimum degree attains the bound.
\end{proof}

\subsection{The quotient and the general argument}\label{sec:quotient}
Theorem~\ref{thm:min-hyperballs} identifies the sets with smallest unit balls.
For a regular graph, they are exactly the nonempty subsets of twin classes.
We use these classes to describe the action of a nonexpansive bijection.

\begin{definition}\label{def_quotient}
Let $G$ be a connected, locally finite graph. Write $Q_G=G/{\equiv}$ for the quotient of $G$ by the relation $u\equiv v$ if and only if $u=v$ or $u$ and $v$ are adjacent twins. Two \emph{distinct} classes $C,D$ are adjacent in
$Q_G$ if some vertex of $C$ is adjacent to some vertex of $D$.
\end{definition}

Equality of the closed neighbourhoods within each class implies that all
vertices of two adjacent classes are mutually adjacent. Thus $Q_G$ is a
well-defined simple connected, locally finite graph. It need not be regular.
We denote its path metric by $d_Q$ and its closed neighbourhoods by $N_Q[C]$. Observe that every element of $Q_G$ is a finite subset of $G$, in particular, an element of $\K G$. The following proposition shows some connections of $Q_G$ and $\K G$:

\begin{proposition}\label{propertyQ}
Let $G$ be a connected, locally finite, $k$-regular graph and let $C,D\in Q_G$.
\begin{enumerate}
\item If $C\ne D$, then $d_Q(C,D)=d_G(c,d)$ for every $c\in C$ and $d\in D$.
In particular, $d_H(A,B)=d_Q(C,D)$ for nonempty $A\subseteq C$, $B\subseteq D$.
\item For $A\in\K C$, $c\in C$, and every integer $n\ge1$,
\[
\B A n=\K{\B c n},
\]
where the ball on the left is in $\K G$ and that on the right is in $G$.
\item 
\[
\B c1=\bigcup_{D\in N_Q[C]}D,
\qquad \sum_{D\in N_Q[C]}|D|=k+1.
\]
In particular, $|\B A1|=2^{k+1}-1$ for every $A\in\K C$.
\end{enumerate}
\end{proposition}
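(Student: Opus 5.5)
We prove the three items in order; each reduces to Lemma~\ref{lem:twins-equiv} and Lemma~\ref{lem:hyperball}.

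For (1), we first show that $d_G(c,d)$ does not depend on the choice of $c\in C$ and $d\in D$. If $c,c'\in C$ are distinct, they are adjacent twins. For any $d\in D$ we have $d\notin\{c,c'\}$, because $C\ne D$ and the classes are disjoint. Lemma~\ref{lem:twins-equiv} then gives $d_G(c,d)=d_G(c',d)$, and the same argument works in the second coordinate. Write $\delta(C,D)$ for this common value and set $\delta(C,C)=0$.

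Next we show $d_Q=\delta$. A geodesic $c=v_0,\dots,v_n=d$ in $G$ projects to a walk of classes $[v_0],\dots,[v_n]$ in which consecutive classes are equal or adjacent. Hence $d_Q(C,D)\le n$. Conversely, take a path $C=C_0,C_1,\dots,C_m=D$ in $Q_G$. As noted after Definition~\ref{def_quotient}, all vertices of adjacent classes are mutually adjacent, so choosing any representatives gives a walk of length $m$ in $G$ from $c$ to $d$. Hence $d_G(c,d)\le m$.

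For the Hausdorff statement, let $\emptyset\ne A\subseteq C$ and $\emptyset\ne B\subseteq D$. Every $a\in A$ satisfies $d(a,B)=\delta(C,D)$, and every $b\in B$ satisfies $d(b,A)=\delta(C,D)$. Therefore $d_H(A,B)=d_Q(C,D)$.

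For (2), fix $A\in\K C$ and $c\in C$. We claim $\B a n=\B c n$ for every $a\in A$ and every $n\ge1$. For $z\notin\{a,c\}$ this follows from Lemma~\ref{lem:twins-equiv}. For $z\in\{a,c\}$ it holds because $d(a,c)\le1\le n$. Plugging this into Lemma~\ref{lem:hyperball}, the condition $\dH(A,U)\le n$ becomes
\[
U\subseteq\B c n\quad\text{and}\quad U\cap\B c n\neq\emptyset .
\]
The second condition is automatic for nonempty $U$. Hence $\B A n=\K{\B c n}$; since $G$ is discrete, this is simply the family of nonempty subsets of the finite set $\B c n$.

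For (3), a vertex $z$ lies in $\B c1$ exactly when $z=c$ or $z\sim c$. This happens exactly when the class $[z]$ equals $C$ or is adjacent to $C$ in $Q_G$. Here we use two facts: $C$ is a clique, and for $[z]\ne C$, adjacency of some pair of representatives is equivalent to adjacency of all pairs. Hence $\B c1$ is the disjoint union of the classes in $N_Q[C]$. Taking cardinalities and using $k$-regularity, $|\B c1|=k+1$, which gives the sum formula. Finally, by (2) with $n=1$, $|\B A1|=2^{k+1}-1$.

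The only delicate point is the well-definedness step in (1), namely the side condition $z\notin\{u,v\}$ in Lemma~\ref{lem:twins-equiv}. We handle it by using $C\ne D$ there, and by treating $z\in\{a,c\}$ separately in (2).
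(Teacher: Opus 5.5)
Your proof is correct and follows essentially the same route as the paper: you project a $G$-geodesic onto classes and lift a $Q_G$-geodesic to representatives for (1), combine the equality $\B a n=\B c n$ with Lemma~\ref{lem:hyperball} for (2), and partition $\B c1$ into the classes of $N_Q[C]$ for (3). Your preliminary well-definedness step via Lemma~\ref{lem:twins-equiv} is harmless but redundant, since the two inequalities in (1) already hold for arbitrary representatives $c\in C$ and $d\in D$.
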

\begin{proof}
\begin{enumerate}
\item Fix $c\in C$ and $d\in D$. If $c = v_0,v_1,\ldots,v_n = d$ is a geodesic from $c$ to $d$, then, replacing each $v_i$ by its twin class and removing consecutive repetitions, we obtain a walk from $C$ to $D$. This walk contains a path of length at most $n$. In particular, $d_Q(C,D)\leq d_G(c,d)$. On the other hand, if $C = V_0,V_1,\ldots,V_n = D$ is a geodesic in $Q_G$, choose $v_0=c$, $v_n=d$ and $v_i\in V_i$ for $0<i<n$. We get a path of the same length from $c$ to $d$ in $G$. This shows that $d_G(c,d)\leq d_Q(C,D)$.

\item Let $B\in\K G$ and fix $c\in C$. Since every $a\in A$ belongs to $C$, Lemma~\ref{lem:twins-equiv} and $n\ge1$ give $\B a n=\B c n$ for every $a\in A$. By Lemma~\ref{lem:hyperball}, $d_H(A,B)\leq n$ if and only if $B\subseteq\B c n$ and $B\cap\B a n\ne\emptyset$ for every $a\in A$. Since $B$ is nonempty and all these balls coincide, the inclusion $B\subseteq\B c n$ already implies the intersection conditions. In particular, $d_H(A,B)\leq n$ if and only if $d_G(c,b)\leq n$ for every $b\in B$. This shows that $\B A n=\K{\B c n}$.

\item The classes meeting $\B c1$ are precisely $N_Q[C]$ and are contained
in that ball. They form a partition of its $k+1$ vertices, proving (3).
\end{enumerate}
\end{proof}

\begin{lemma}\label{lem:extension}
Let $G$ be a connected, locally finite, $k$-regular graph. Consider $Q=Q_G$.
Let $q:Q\to Q$ be a bijective isometry such that $|q(C)|=|C|$ for every $C\in Q$.
Then there is a bijective isometry $g:G\to G$ such that
$$q(C) = g[C]=\{g(c): c\in C\}$$
for every $C\in Q$.
\end{lemma}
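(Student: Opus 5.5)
The plan is to build $g$ class by class. For each $C\in Q$ we have $|q(C)|=|C|$, and both sets are finite cliques of $G$. So we can choose an arbitrary bijection $g_C\colon C\to q(C)$. Then define $g\colon G\to G$ by $g(c)=g_C(c)$ for $c\in C$.

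Since the twin classes partition $G$ and $q$ is a bijection of $Q$, the images $q(C)$ also partition $G$. Hence $g$ is a bijection, and by construction $g[C]=q(C)$ for every $C$. What remains is to show that $g$ is an isometry. By Lemma~\ref{isometry-automorphism} it suffices to check that $g$ preserves adjacency in both directions.

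Let $u\ne v$ be vertices with $u\in C$ and $v\in D$. There are two cases.
\begin{enumerate}
\item If $C=D$, then $u\sim v$ because classes are cliques. Also $g(u),g(v)$ are distinct elements of $q(C)$, which is a clique, so $g(u)\sim g(v)$.
\item If $C\ne D$, then by Proposition~\ref{propertyQ}(1) we have $d_G(u,v)=d_Q(C,D)$. Since $q(C)\ne q(D)$, the same result gives $d_G(g(u),g(v))=d_Q(q(C),q(D))$, and this equals $d_Q(C,D)$ because $q$ is an isometry of $Q$.
\end{enumerate}
In either case $d_G(g(u),g(v))=d_G(u,v)$. So $g$ is in fact directly an isometry, and Lemma~\ref{isometry-automorphism} is not even needed.

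The only point needing care is Proposition~\ref{propertyQ}(1). It is stated for regular $G$, which is our hypothesis; its argument does not actually use regularity, so this causes no trouble. The only place where the hypothesis $|q(C)|=|C|$ enters is in the existence of the bijections $g_C$. I expect no real obstacle: the argument is essentially bookkeeping once the distance formula between distinct classes is available.
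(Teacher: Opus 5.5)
Your proposal is correct and follows essentially the same route as the paper: choose bijections $C\to q(C)$ class by class, glue them into $g$, and verify distances by splitting into the same-class case (twin classes are cliques) and the distinct-class case (Proposition~\ref{propertyQ}(1) together with $q$ being an isometry). As you note, Lemma~\ref{isometry-automorphism} is not needed, and the paper does not use it either.
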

\begin{proof}
For every class $C\in Q$, since $|q(C)|=|C|$, fix a bijection
$f_{C}:C\to q(C)$. Now, for every $v\in G=\bigcup_{C\in Q}C$,
let $C_v\in Q$ be the unique class such that $v\in C_v$. Define
$g(v)=f_{C_v}(v)$.

Then, $g$ is a bijection from $G$ onto $G$. Let us prove that $g$ is an isometry. Let $v,w\in G$. Consider
$C_v,C_w\in Q$ such that $v\in C_v$ and $w\in C_w$. Then
$g(v)=f_{C_v}(v)\in q(C_v)$ and
$g(w)=f_{C_w}(w)\in q(C_w)$.

If $v = w$, then $d(g(v), g(w)) = d(f_{C_v}(v), f_{C_v}(v)) = 0 = d(v,w)$. Assume now that $v\neq w$.
If $C_v=C_w$, then $v$ and $w$ are adjacent twins and since $f_{C_v}[C_v] = q(C_v)\in Q$ is a twin class, we conclude that $d(v,w) = 1 = d(g(v), g(w))$.

On the other hand, suppose $C_v\ne C_w$. Since $q$ is an isometry,
$$d_Q(q(C_v),q(C_w))=d_Q(C_v,C_w).$$
Now, by Proposition~\ref{propertyQ}(1),
$d_Q(q(C_v),q(C_w))=d(g(v),g(w))$ and
$d_Q(C_v,C_w)=d(v,w)$, where we conclude
$$d(g(v),g(w))=d(v,w).$$
\end{proof}

\begin{theorem}\label{automorphism_g}
Let $G$ be a connected, locally finite, $k$-regular graph and put $Q=Q_G$.
Let $F:\K G\to\K G$ be a nonexpansive bijection. Then there is a bijective isometry
$q:Q\to Q$ such that
$$F^{-1}[\K{C}]=\K{q(C)}$$
and $|q(C)|=|C|$ for every twin class $C\in Q$.
\end{theorem}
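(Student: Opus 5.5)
The plan is to let $q$ be the map $D\mapsto C$ that sends a twin class $D$ to the class $C$ containing $F^{-1}[\K D]$, and to check neighbourhood by neighbourhood that $q$ is a size-preserving graph automorphism of $Q$. Write $\mathcal M=\bigcup_{C\in Q}\K C$. Since $G$ is $k$-regular, every class consists of vertices of degree $k$. So by Theorem~\ref{thm:min-hyperballs}, $\mathcal M$ is exactly the family of $A\in\K G$ with $|\B A1|=2^{k+1}-1$, the minimum possible value.

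\textbf{Step 1 (minimal sets pull back to minimal sets).} By Lemma~\ref{general_fact_injectivity}, $|\B A1|\le|\B{F(A)}1|$ for every $A$. Hence if $F(A)\in\mathcal M$, then $|\B A1|\le 2^{k+1}-1$, which forces $A\in\mathcal M$. Now fix $B\in\K D$ and let $A=F^{-1}(B)$, which lies in $\K C$ for some class $C$. Lemma~\ref{general_fact_injectivity} gives $F[\B A1]\subseteq\B B1$. By Proposition~\ref{propertyQ}(2),(3), both balls are finite of the same size $2^{k+1}-1$, namely $\K{N[c]}$ and $\K{N[d]}$ for $c\in C$, $d\in D$. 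Since $F$ is injective, this yields
\[
F^{-1}[\K{N[d]}]=\K{N[c]}.
\]
The left side depends only on $D$. The right side determines $N[c]$, which in turn determines the twin class $C$. Therefore all of $F^{-1}[\K D]$ lies in a single $\K C$, and I set $q(D)=C$. Distinct classes have disjoint, nonempty preimage families, so $q$ is injective. Comparing cardinalities gives $2^{|D|}-1\le 2^{|C|}-1$, that is, $|D|\le|q(D)|$.

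\textbf{Step 2 (local surjectivity and size preservation).} This is the key step. Take $D'\in N_Q[D]$. Then $\K{D'}\subseteq\K{N[d]}$, so its preimages lie in $\K{N[c]}\cap\mathcal M$. A twin class meeting $N[c]$ is contained in $N[c]$ and belongs to $N_Q[C]$, so $q(D')\in N_Q[q(D)]$. Thus $q$ restricts to an injection $N_Q[D]\to N_Q[q(D)]$ with $|D'|\le|q(D')|$. By Proposition~\ref{propertyQ}(3),
\[
\sum_{D'\in N_Q[D]}|D'|=k+1=\sum_{C'\in N_Q[q(D)]}|C'|.
\]
All weights are positive, so injectivity forces every inequality $|D'|\le|q(D')|$ to be an equality and the restriction to be onto $N_Q[q(D)]$. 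In particular $|q(D)|=|D|$. Then $F^{-1}[\K D]\subseteq\K{q(D)}$ is an inclusion between finite families of equal size $2^{|D|}-1$, hence an equality.

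\textbf{Step 3 (global bijectivity and isometry).} By Step 2, the image $q[Q]$ is nonempty and contains $N_Q[q(D)]$ for every $D$. It is therefore a union of connected components of $Q$, and since $Q$ is connected, $q$ is surjective. Step 2 also shows that $q$ maps neighbourhoods onto neighbourhoods. Combined with injectivity, this means $D\sim D'$ if and only if $q(D)\sim q(D')$. Applying Lemma~\ref{isometry-automorphism} to the graph $Q$, $q$ is a bijective isometry with $|q(C)|=|C|$ and $F^{-1}[\K C]=\K{q(C)}$, as required.

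The main obstacle is Step 2. Step 1 alone gives only $|D|\le|q(D)|$ and a nonexpansive injection of $Q$; the counting identity $\sum|D'|=k+1$, which relies on regularity, is what upgrades this to size equality and local surjectivity.
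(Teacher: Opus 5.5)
Your route is the paper's. You use the minimum-ball characterization to define $q$, turn $F[\B{A}{1}]=\B{B}{1}$ into $F^{-1}[\K{N[d]}]=\K{N[c]}$, use the regularity count $\sum_{D'\in N_Q[D]}|D'|=k+1$ for local surjectivity and size preservation, and finish with connectedness and Lemma~\ref{isometry-automorphism}.

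However, the injectivity of $q$ in Step~1 is not justified as written. It is true that $F^{-1}[\K{D_1}]$ and $F^{-1}[\K{D_2}]$ are disjoint and nonempty, but this does not stop both from lying in the same family $\K{C}$. At that stage you only know $F^{-1}[\K{D_i}]\subseteq\K{q(D_i)}$ and $|D_i|\le|q(D_i)|$. For instance, two singleton classes could a priori both have their preimages inside one $\K{C}$ with $|C|=2$, which has three elements. This matters: the second inequality of your Step~2 count, $\sum_{D'\in N_Q[D]}|q(D')|\le\sum_{C'\in N_Q[q(D)]}|C'|$, needs $q$ injective on $N_Q[D]$, and so does the adjacency equivalence in Step~3.

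The repair is already on your page. Suppose $q(D_1)=q(D_2)=C$, and take $c\in C$, $d_1\in D_1$, $d_2\in D_2$. Your identity gives
\[
F^{-1}[\K{N[d_1]}]=\K{N[c]}=F^{-1}[\K{N[d_2]}].
\]
Since $F$ is a bijection, $\K{N[d_1]}=\K{N[d_2]}$. Taking unions gives $N[d_1]=N[d_2]$, so $d_1\equiv d_2$ and $D_1=D_2$. This is exactly how the paper proves injectivity.

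With that fixed, the rest of your argument is correct, and your two variations are slightly more economical than the paper's. You get $q(D')\in N_Q[q(D)]$ directly from the preimages lying in $\K{N[c]}$, whereas the paper computes a Hausdorff distance via Proposition~\ref{propertyQ}(1). You get surjectivity by noting that $q[Q]$ is closed under taking neighbours, whereas the paper lifts a path step by step.
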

\begin{proof}
\textbf{The function $q$}: Fix $C\in Q$. Since $C$ is a set of twins,
$|\B{C}1|$ has the smallest possible cardinality
(Theorem~\ref{thm:min-hyperballs}). By
Lemma~\ref{general_fact_injectivity}, it follows that $A=F^{-1}(C)$
also has a unit ball of minimum size. Thus, there is a unique class
$D\in Q$ such that $A\in\K{D}$. Define $q(C)=D$.

\textbf{$q$ is injective}: Observe that, since $A\in\K{q(C)}$, by
Proposition~\ref{propertyQ}(2) and Lemma~\ref{general_fact_injectivity} we conclude that
$$F[\B A1]=F[\B{q(C)}1]=\B{C}1.$$
In particular, if $q(C)=q(D)$, then $\B{C}1=\B{D}1$.
By Proposition~\ref{propertyQ}(2), this means
$\K{\B c1}=\B{C}1 =\B{D}1 =\K{\B d1}$, for every $c\in C$ and $d\in D$. So $\B c1=\B d1$ and therefore $C=D$ because $c$ and $d$ would be twins for every $c\in C$ and $d\in D$.
This shows that the map is injective.

\textbf{$q$ preserves adjacency and class sizes}: Let $B\in\K{C}$
and $A=F^{-1}(B)$. As in the first paragraph, the minimum-ball criterion
shows that $A$ is contained in one twin class. Then, by
Lemma~\ref{general_fact_injectivity} and equality of the finite
cardinalities, $F[\B A1]=\B B1$ and therefore, by
Proposition~\ref{propertyQ}(2), since $\B B1=\B{C}1$ we conclude
$$F[\B A1]=\B B1=\B{C}1=F[\B{q(C)}1].$$
In particular, $\B A1=\B{q(C)}1$, which implies that the class containing
$A$ is $q(C)$. This proves that
$F^{-1}[\K{C}]\subseteq\K{q(C)}$. In particular, since $F$ is a bijection,
$$2^{|C|}-1\le2^{|q(C)|}-1$$
and therefore $|C|\leq |q(C)|$.

Let us prove that $q$ preserves adjacency. Let $C$ and $D$ be two
adjacent elements in $Q$. For any $A\in\K{C}$ and $B\in\K{D}$,
we have $A\in\B B1$. 
Since
$F[\B{q(D)}1]=\B{D}1=\B B1$, we conclude that
$F^{-1}(A)\in\B{q(D)}1$.
On the other hand, by the definition of $q$, we have $F^{-1}(A)\in\K{q(C)}$ and $F^{-1}(B)\in\K{q(D)}.$ 

Since $q(C)\ne q(D)$, Proposition~\ref{propertyQ}(1) implies that
$$d_Q(q(C), q(D)) = d_H(F^{-1}(A), F^{-1}(B)) = 1$$
where in the last statement we use the fact that $F^{-1}(A)\in\B{q(D)}1 =\B {F^{-1}(B)}1$. This proves that $q(C)$ and $q(D)$ are adjacent.
In particular,
$$q[N_Q[C]]\subseteq N_Q[q(C)].$$
Then, since $q$ is injective, Proposition~\ref{propertyQ}(3) gives
$$
k+1=\sum_{A\in N_Q[C]}|A|\le\sum_{A\in N_Q[C]}|q(A)|\le\sum_{A\in N_Q[q(C)]}|A|=k+1.
$$
This implies that
$$q[N_Q[C]]=N_Q[q(C)],$$
and moreover, $|q(D)|=|D|$ for every $D\in N_Q[C]$.
In particular, $|q(C)|=|C|$ for every class $C$  and therefore, the finite families $F^{-1}[\K{C}]$ and $\K{q(C)}$ have
the same cardinality, so we have
$$F^{-1}[\K{C}]=\K{q(C)}.$$

\textbf{$q$ is surjective}: Let $D\in Q$ and fix a class $C\in Q$.
Since $Q$ is connected, there is a path
$$q(C)=V_0,V_1,\ldots,V_n=D$$
that connects $q(C)$ and $D$. If $n=0$, then $q(C)=D$ and we are done.
Otherwise, since $q[N_Q[C]]=N_Q[q(C)]$ and $V_1$ is adjacent
to $q(C)$, there is a class $V_1'\in N_Q[C]$ such that
$q(V_1')=V_1$.
Again, if $n\ge2$, since
$$q[N_Q[V_1']]=N_Q[q(V_1')]=N_Q[V_1],$$
there exists $V_2'\in N_Q[V_1']$ such that $q(V_2')=V_2$.
Proceeding inductively, we obtain a class $V_n'\in Q$ such that $q(V_n')=V_n = D$, which proves that $q$ is surjective.

\textbf{$q$ is an isometry}: It remains to check that adjacency is also
preserved by the inverse. If $q(C)$ and $q(D)$ are adjacent, then
$q(D)\in N_Q[q(C)]=q[N_Q[C]]$. By injectivity, $D\in N_Q[C]$ and as the two classes are distinct, they are adjacent. Hence $q$ is a
bijection preserving adjacency in both directions, and
Lemma~\ref{isometry-automorphism} shows that $q$ is an isometry.
\end{proof}

\begin{theorem}\label{every-regular-graph-plastic}
Let $G$ be a connected, locally finite, $k$-regular graph. Then $\K G$ is plastic.
\end{theorem}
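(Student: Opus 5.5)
Let $F\colon\K G\to\K G$ be a nonexpansive bijection. If $G$ is finite we are done by Theorem~\ref{tb-plastic}, so assume $G$ infinite. By Theorem~\ref{automorphism_g} there is a bijective isometry $q$ of $Q=Q_G$ with $|q(C)|=|C|$ and $F^{-1}[\K C]=\K{q(C)}$ for every class $C$. Lemma~\ref{lem:extension} lifts $q$ to a bijective isometry $g$ of $G$ with $g[C]=q(C)$. The induced map $\hat g(A)=g[A]$ is an isometry of $\K G$. Replace $F$ by $F'=F\circ\hat g$. This is again a nonexpansive bijection, and it satisfies $F'[\K C]=\K C$ for every twin class $C$. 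It suffices to show that $F'$ is an isometry, so from now on we assume $F[\K C]=\K C$ for all $C$.

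Fix a vertex $v$ with class $C_0$ and, for $n\ge1$, put $B_n=\B vn$. By Proposition~\ref{propertyQ}(2), $\K{B_n}=\B{C_0}n$ is the closed $n$-ball in $\K G$ around the point $C_0\in\K G$. By Lemma~\ref{general_fact_injectivity} and $F(C_0)=C_0$, we get $F[\K{B_n}]\subseteq\K{B_n}$. Since $\K{B_n}$ is finite (Lemma~\ref{countable}) and $F$ is injective, this inclusion is an equality: $F[\K{B_n}]=\K{B_n}$.

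The restriction $F|_{\K{B_n}}$ is therefore a nonexpansive bijection of a finite metric space, and Theorem~\ref{tb-plastic} makes it an isometry of $\K{B_n}$ with the Hausdorff metric induced from $\K G$. Note that $d_H$ on $\K{B_n}$ is computed with $d_G$, not the intrinsic metric of the subgraph, so this is exactly the restriction of the metric of $\K G$. Given $A,B\in\K G$, both are finite, so they lie in some $\K{B_n}$ because the balls exhaust $G$. Hence $d_H(F(A),F(B))=d_H(A,B)$, and $F$ is an isometry.

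All the difficulty sits in Theorem~\ref{automorphism_g}. Once that is available, the only step needing care is the observation that a fixed twin class $C_0$ has hyperspace balls equal to $\K{B_n}$, which is Proposition~\ref{propertyQ}(2). This is what lets us trap $F$ on an exhausting family of finite invariant sets.
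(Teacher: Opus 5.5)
Your proof is the paper's proof. You normalize $F$ via Theorem~\ref{automorphism_g} and the lift of Lemma~\ref{lem:extension} so that every family $\K C$ is invariant, then trap the normalized map on the finite sets $\K{\B vn}$, apply finite plasticity, and exhaust $G$. Composing with $\hat g$ on the right, rather than with $g^*$ on the left as the paper does, makes no difference.

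One justification needs correcting. From $F[\K{C_0}]=\K{C_0}$ you cannot conclude $F(C_0)=C_0$, and this can actually fail. The isometry of Lemma~\ref{twins} leaves every $\K C$ invariant but sends a two-element twin class $\{x,y\}$ to $\{x\}$.

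You only need $F(C_0)\in\K{C_0}$. Proposition~\ref{propertyQ}(2) then gives $\B{F(C_0)}n=\K{B_n}$ as well, so Lemma~\ref{general_fact_injectivity} still yields $F[\K{B_n}]\subseteq\K{B_n}$. This is exactly how the paper phrases that step, and with it your argument is complete.
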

\begin{proof}
Let $F:\K G\to\K G$ be a nonexpansive bijection. By
Theorem~\ref{automorphism_g}, there exists an isometry $q:Q_G\to Q_G$ such that
$$F^{-1}[\K{C}]=\K{q(C)}$$
and $|q(C)|=|C|$ for every class $C\in Q_G$.
Now, by Lemma~\ref{lem:extension}, there is an isometry $g:G\to G$ such that
$q(C)=g[C]$. Define $g^*(A)=\{g(a):a\in A\}$.
Since $F[\K{C}]=\K{q^{-1}(C)}$, we have
$$(g^*\circ F)[\K{C}]
=g^*[\K{q^{-1}(C)}]=\K{C}.$$

Define $H=g^*\circ F:\K G\to\K G$ which is a nonexpansive bijection, because $g^*$ is an isometry. Let $C\in Q_G$, $A\in\K{C}$
and let $n\ge1$ be an integer. Then $A$ and $H(A)$ belong to $\K{C}$
and therefore, by Proposition~\ref{propertyQ}(2),
$\B A n=\B{H(A)}n=\K{\B c n}$, for every $c\in C$. Fix any $c\in C$.
Since $H$ is a nonexpansive injection, by
Lemma~\ref{general_fact_injectivity} and the finiteness of these balls
we conclude that
$$H[\K{\B c n}]=H[\B A n]=\B{H(A)}n=\K{\B c n}.$$
In particular, the restriction of $H$ is a nonexpansive bijection from
$\K{\B c n}$ onto itself. Since $\K{\B c n}$ is finite, and therefore
compact, by Theorem~\ref{tb-plastic} it is plastic. This shows that the
restriction of $H$ to $\K{\B c n}$ is an isometry.
The result now follows from the fact that for every $A,B\in\K G$, there
exists $n$ such that $A,B\in\K{\B c n}$. Thus $H$ is an isometry, and so
is $F$, since $g^*$ is an isometry.
\end{proof}

\begin{corollary}\label{cor:normalized-balls}
Let $G$ be a connected, locally finite, regular graph. For every nonexpansive
bijection $F$ of $\K G$, there is a bijective isometry $g_0$ of $G$ such that
\begin{equation}\label{eq:normalized-balls}
F[\K{\B v n}]=\K{g_0[\B v n]}
\quad(v\in G,\ n\ge1).
\end{equation}
\end{corollary}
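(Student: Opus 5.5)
We reuse the normalization from the proof of Theorem~\ref{every-regular-graph-plastic}. Let $F$ be a nonexpansive bijection of $\K G$. Theorem~\ref{automorphism_g} gives an isometry $q$ of $Q_G$ with $F^{-1}[\K C]=\K{q(C)}$ and $|q(C)|=|C|$. Lemma~\ref{lem:extension} then gives an isometry $g$ of $G$ with $g[C]=q(C)$ for every class $C$. Set $H=g^*\circ F$. As shown there, $H[\K C]=\K C$ for every twin class $C$.

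Next, fix $v\in G$ and $n\ge1$, and let $C$ be the twin class of $v$. Choose any $A\in\K C$; then $H(A)\in\K C$ as well. By Proposition~\ref{propertyQ}(2), both $\B A n$ and $\B{H(A)}n$ equal $\K{\B v n}$, which is finite. Lemma~\ref{general_fact_injectivity} gives $H[\B A n]\subseteq\B{H(A)}n$, and since $H$ is injective and the two sets have the same finite cardinality, this inclusion is an equality. Hence
\[
H[\K{\B v n}]=\K{\B v n}\qquad(v\in G,\ n\ge 1).
\]

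Finally, we undo the normalization. Since $F=(g^*)^{-1}\circ H=(g^{-1})^*\circ H$, we obtain
\[
F[\K{\B v n}]=(g^{-1})^*[\K{\B v n}]=\K{g^{-1}[\B v n]}.
\]
Here we use that the map induced by a bijection of $G$ carries $\K S$ onto $\K{\text{image of }S}$. The isometry $g_0=g^{-1}$ is independent of $v$ and $n$, so \eqref{eq:normalized-balls} holds.

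There is no real obstacle. The only points needing care are the direction of composition, i.e.\ that $g_0$ is $g^{-1}$ rather than $g$, and the fact that the equality $H[\B A n]=\B{H(A)}n$ requires $n\ge1$ so that Proposition~\ref{propertyQ}(2) applies.
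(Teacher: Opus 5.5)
Your proposal is correct and is essentially the paper's proof. You normalize by $H=g^*\circ F$, obtain $H[\K{\B v n}]=\K{\B v n}$ from Proposition~\ref{propertyQ}(2), Lemma~\ref{general_fact_injectivity} and finiteness (as in the proof of Theorem~\ref{every-regular-graph-plastic}), and then pull back through $(g^{-1})^*$ to get $g_0=g^{-1}$; the only difference is that you re-derive this invariance explicitly, whereas the paper cites it from that earlier proof.
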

\begin{proof}
  Consider $H = g^*\circ F$ as in the proof of
  Theorem~\ref{every-regular-graph-plastic}. Then
  $$H[\K{\B v n}]=\K{\B v n}.$$

  Since $H=g^*\circ F$, it follows that
  $$F[\K{\B v n}]
  =(g^*)^{-1}[\K{\B v n}]
  =\K{g^{-1}[\B v n]}.$$
  To conclude, take $g_0=g^{-1}$.
  \end{proof}
\subsection{Consequences}\label{sec:not-induced}
We now apply Theorem~\ref{automorphism_g} to describe when nonexpansive bijections of $\K G$ preserve singletons, and then consider regular trees.

We call two points $x\ne y$ of a metric space $X$ \emph{twins} if $d(x,z)=d(y,z)$ for
every $z\in X\setminus\{x,y\}$. It follows from Lemma~\ref{lem:twins-equiv} that, in a connected graph,
adjacent twins are exactly the twins at distance $1$.

\begin{lemma}\label{twins}
Let $X$ be a metric space containing twins $x\ne y$ with $d(x,y)=1$, and suppose that
$d(x,z)\ge1$ for every $z\in X\setminus\{x\}$. Then the map $F\colon\K X\to\K X$
defined by
\[
F(A)=
\begin{cases}
\{x,y\},&A=\{x\},\\
\{x\},&A=\{x,y\},\\
A,&\text{otherwise},
\end{cases}
\]
is an isometry of $\K X$ which does not map the set of singletons onto itself. In
particular, $F$ is not induced by any isometry of $X$.
\end{lemma}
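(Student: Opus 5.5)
The map $F$ is a bijection because it is the transposition exchanging $\{x\}$ and $\{x,y\}$. Since $F$ is an involution, it suffices to show $d_H(F(A),F(B))=d_H(A,B)$ for all $A,B\in\K X$. The key observation is that, because $x$ and $y$ are twins, the distance from any point to $\{x\}$ and to $\{x,y\}$ essentially agree. More precisely, for every $z\in X\setminus\{x,y\}$ we have
\[
d(z,\{x\})=d(x,z)=\min\{d(x,z),d(y,z)\}=d(z,\{x,y\}).
\]
Also $d(x,\{x\})=d(x,\{x,y\})=0$.

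The one exceptional point is $y$: $d(y,\{x\})=1$ while $d(y,\{x,y\})=0$. Note that $d(y,z)\ge1$ for all $z\ne y$, since $d(y,x)=1$ and $d(y,z)=d(x,z)\ge1$ otherwise.

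\textbf{Trivial cases.} If neither $A$ nor $B$ lies in $\{\{x\},\{x,y\}\}$, nothing changes. If both lie in it, the unordered pair is preserved, so the distance is preserved.

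\textbf{Main case.} The only real case is $A\in\{\{x\},\{x,y\}\}$ and $B$ arbitrary outside this pair. Here I will compare $d_H(\{x\},B)$ with $d_H(\{x,y\},B)$ and show they are equal. The two suprema in the Hausdorff metric are handled separately.

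\emph{Term $\sup_{b\in B}d(b,A)$.} By the displayed identity this term is the same for both choices of $A$, except for the contribution of $b=y$ when $y\in B$. That contribution is $1$ for $A=\{x\}$ and $0$ for $A=\{x,y\}$.

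\emph{Term $\sup_{a\in A}d(a,B)$.} For $A=\{x,y\}$ this is $\max\{d(x,B),d(y,B)\}$. Using $d(y,z)=d(x,z)$ for $z\ne x,y$ and $d(x,y)=1$, we get $d(y,B)=d(x,B)$ unless $x\in B$ or $y\in B$.

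\textbf{Subcases by how $B$ meets $\{x,y\}$.} This is the main obstacle: checking the exceptional terms $1$ versus $0$ never change the maximum.
\begin{itemize}
\item If $y\in B$ and $B\ne\{x\},\{x,y\}$, then $B$ contains some $z\notin\{x,y\}$ or equals $\{y\}$. If $z\in B$ with $z\notin\{x,y\}$, then $d(z,\{x\})\ge1$, so the extra value $1$ coming from $b=y$ does not raise the supremum. If $B=\{y\}$, direct computation gives $d_H(\{x\},\{y\})=1=d_H(\{x,y\},\{y\})$.
\item If $x\in B$ but $y\notin B$, then $d(y,B)\le1$. Since $B\ne\{x\}$, some $z\ne x,y$ lies in $B$, and then $d(z,\{x\})\ge1$ dominates, so the extra term does not change the value.
\item If $B\cap\{x,y\}=\emptyset$, then $d(y,B)=d(x,B)$ and both terms agree outright.
\end{itemize}
The hypothesis $d(x,z)\ge1$ is used exactly here, to guarantee the dominating value is at least $1$.

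\textbf{Conclusion.} Once $F$ is shown to be an isometry, observe that $F(\{x\})=\{x,y\}$ is not a singleton, so $F$ does not map the set of singletons onto itself. Any map induced by an isometry $g$ of $X$, namely $A\mapsto g[A]$, sends singletons to singletons. Hence $F$ is not induced by any isometry of $X$.
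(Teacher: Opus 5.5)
Your proof is correct and follows essentially the same route as the paper: reduce to showing $d_H(B,\{x\})=d_H(B,\{x,y\})$ for $B\notin\{\{x\},\{x,y\}\}$, treat $B=\{y\}$ directly, and use the twin property together with $d(x,z)\ge1$ to see that the exceptional values $1$ versus $0$ never change either supremum. The paper avoids your three subcases by fixing a single $b_0\in B\setminus\{x,y\}$ and bounding $d(y,B)\le d(y,b_0)=d(x,b_0)\le\sup_{b\in B}d(b,x)$; in your first subcase you leave the term $\sup_{a\in A}d(a,B)$ implicit, but it is clearly unchanged since $d(y,B)=0$ there.
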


\begin{proof}
It is not difficult to construct the inverse of $F$. In particular, $F$ is a bijection. Since $F$ moves only $\{x\}$ and $\{x,y\}$,
it suffices to prove that 
$$d_H(B,\{x\})=d_H(B,\{x,y\})$$
for every $B\in\K X$, with $B\notin \set{\set{x}, \set{x,y}}$.

The equality is immediate when $B=\{y\}$. Thus, we may assume that there exists $b_0\in B$ with $b_0\notin\{x,y\}$.

Since $d(x,B)=\inf_{b\in B}d(b,x)\le\sup_{b\in B}d(b,x)$, we have
$$d_H(B,\{x\})=\sup_{b\in B}d(b,x).$$
On the other hand, for each $b\in B$, we have
$d(b, \set{x,y}) = 0$ if $b\in \set{x,y}$ or $d(b, \set{x,y}) = d(b,x)$ if $b \notin\set{x,y}$. 

By hypothesis, $d(b_0,x)\ge1$, hence
$\sup_{b\in B\setminus\{x,y\}}d(b,x)\ge1$, while the possible contributions of
$x$ and $y$ to $\sup_{b\in B}d(b,x)$ are $d(x,x)=0$ and $d(y,x)=1$. Therefore
\[
\sup_{b\in B}d(b,\{x,y\})=\sup_{b\in B\setminus\{x,y\}}d(b,x)=\sup_{b\in B}d(b,x).
\]

Moreover,
\[
d(x,B)\le\sup_{b\in B}d(b,x),\qquad
d(y,B)\le d(y,b_0)=d(x,b_0)\le\sup_{b\in B}d(b,x),
\]
since $x$ and $y$ are twins. Hence,
$$d_H(B,\{x,y\})=\sup_{b\in B}d(b,x)=d_H(B,\{x\}).$$

\end{proof}

\begin{corollary}\label{cor:twins-main}
Let $G$ be an infinite connected, locally finite, regular graph. Then every
nonexpansive bijection of $\K G$ maps the set of singletons onto itself if and
only if $G$ has no adjacent twins. In this case, for every such bijection $F$
there is an isometry $f:G\to G$ such that $F(\{v\})=\{f(v)\}$ for every $v\in G$.
\end{corollary}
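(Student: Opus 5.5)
We prove the two directions separately and then derive the description of $F$ on singletons.

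\emph{Twins present.} Suppose $G$ has adjacent twins $x\ne y$. Since $G$ carries the path metric, distinct vertices are at distance at least $1$. By Lemma~\ref{lem:twins-equiv}, $x$ and $y$ are twins at distance $1$, so Lemma~\ref{twins} applies to $X=G$. It produces an isometry $F$ of $\K G$, which is in particular a nonexpansive bijection. This $F$ sends the singleton $\{x\}$ to the non-singleton $\{x,y\}$. Hence not every nonexpansive bijection maps singletons onto singletons.

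\emph{No twins.} Conversely, suppose $G$ has no adjacent twins. Then every twin class is a singleton $\{v\}$, so $Q_G$ is canonically identified with $G$ itself.

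Let $F$ be a nonexpansive bijection of $\K G$. By Theorem~\ref{automorphism_g} there is a bijective isometry $q$ of $Q_G$ with
\[
F^{-1}[\K{\{v\}}]=\K{q(\{v\})}.
\]
Since $\K{\{v\}}=\{\{v\}\}$, this says $F^{-1}(\{v\})=\{w\}$, where $q(\{v\})=\{w\}$. Equivalently, define $p\colon G\to G$ by $q(\{v\})=\{p(v)\}$. Then $F(\{p(v)\})=\{v\}$ for every $v$.

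Now put $f=p^{-1}$. The map $p$ is a bijective isometry of $G$, because $q$ is an isometry of $Q_G$ and $d_Q$ agrees with $d_G$ on singleton classes by Proposition~\ref{propertyQ}(1). Hence $f$ is also a bijective isometry, and
\[
F(\{u\})=\{f(u)\}\qquad(u\in G).
\]
Thus $F$ maps the set of singletons into itself. It maps it onto itself because $f$ is surjective.

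\emph{Main point.} The only step needing care is checking that $F$, and not merely $F^{-1}$, sends singletons to singletons. This follows because $q$ is a bijection of the classes, so the set identity from Theorem~\ref{automorphism_g} holds for every class. Inverting it gives $F[\K{\{p(v)\}}]=\K{\{v\}}$, and all remaining steps are bookkeeping. The infiniteness hypothesis is not needed for the argument, only for the standing framework.
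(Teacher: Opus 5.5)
Your proof is correct and follows the paper's argument: Lemma~\ref{twins}, via Lemma~\ref{lem:twins-equiv}, handles the direction with twins, and Theorem~\ref{automorphism_g} applied to singleton twin classes handles the converse. The one small difference is how you show $f$ is an isometry: you derive it from $q$ and Proposition~\ref{propertyQ}(1), whereas the paper first uses Theorem~\ref{every-regular-graph-plastic} to know $F$ is an isometry and then reads off $d(f(u),f(v))=d_H(F(\{u\}),F(\{v\}))=d(u,v)$.
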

\begin{proof}
Let $F:\K G \to \K G$ be a nonexpansive bijection. By Theorem \ref{every-regular-graph-plastic}, we have that $F$ is in fact an isometry.
Suppose that $G$ has no adjacent twins. Then every class $C$ in $Q_G$ is a singleton.
By Theorem~\ref{automorphism_g}, $F^{-1}$ permutes the families $\K{C}$, which are singletons. In particular, $F$ maps
singletons onto singletons. This implies that there exists a map $f:G\to G$ such that 
$$F(\{v\}) = \{f(v)\}$$
for every $v\in G$ and it is not difficult to see that this map $f$ is an isometry.

Conversely, suppose that $G$ has adjacent twins $u\ne v$. By
Lemma~\ref{lem:twins-equiv}, they are metric twins at distance $1$.
Lemma~\ref{twins} gives an isometry of $\K G$ which maps $\{u\}$ to
$\{u,v\}$ and therefore does not preserve singletons.
\end{proof}

\begin{corollary}\label{main-regular-trees}
Let $\Td$ be a regular tree of degree $k\ge2$. Then $\K\Td$ is plastic.
Moreover, every nonexpansive bijection $F:\K\Td\to\K\Td$ maps the set of
singletons onto itself, and there is an isometry $f$ of $\Td$ with
$F(\{v\})=\{f(v)\}$ for every vertex $v$.
\end{corollary}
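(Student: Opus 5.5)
The plan is to obtain this as a direct specialization of the results already proved for regular graphs. First, $\Td$ is a connected, locally finite, $k$-regular graph, so Theorem~\ref{every-regular-graph-plastic} immediately gives that $\K\Td$ is plastic. For the statement about singletons, it suffices by Corollary~\ref{cor:twins-main} to check two things: that $\Td$ is infinite (clear for $k\ge2$) and that $\Td$ has no adjacent twins.

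To rule out adjacent twins, take adjacent vertices $u\sim v$. Since $k\ge2$, $v$ has a neighbour $w\ne u$. If $\B u1=\B v1$, then $w\in\B u1$, so $w\sim u$, and $u,v,w$ would form a $3$-cycle. This contradicts acyclicity. Hence every twin class of $\Td$ is a singleton, and $Q_{\Td}$ is just $\Td$ itself.

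Corollary~\ref{cor:twins-main} then yields that every nonexpansive bijection $F$ of $\K\Td$ maps singletons onto singletons, and that there is an isometry $f$ of $\Td$ with $F(\{v\})=\{f(v)\}$ for every vertex $v$.

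If one prefers to see $f$ directly, note that Theorem~\ref{automorphism_g} gives an isometry $q$ of $Q_{\Td}$ with $F^{-1}[\K{\{v\}}]=\K{q(\{v\})}$. Setting $f$ to be the map induced by $q^{-1}$ recovers $f$, and it is an isometry by Lemma~\ref{isometry-automorphism}.

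There is no real obstacle here. The only point requiring care is the use of $k\ge2$ to produce the third vertex $w$ in the no-twins argument. For $k=1$ the tree is a single edge, whose two vertices are twins, so the hypothesis on $k$ is genuinely needed.
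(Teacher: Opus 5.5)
Your proposal is correct and follows the paper's proof: plasticity comes from Theorem~\ref{every-regular-graph-plastic}, and the singleton statement comes from Corollary~\ref{cor:twins-main} together with the fact that a tree has no adjacent twins. Your triangle argument, which uses $k\ge2$ to produce the third vertex $w$, supplies the detail that the paper asserts without proof.
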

\begin{proof}
The plasticity of $\K\Td$ follows from
Theorem~\ref{every-regular-graph-plastic}. The second part follows from Corollary \ref{cor:twins-main} and from the fact that in a tree, there are no adjacent twins.

\end{proof}

\section{Products with a compact connected factor}\label{sec:extras}
In this section, we prove that $\K{K\times G}$ is plastic whenever $K$ is
compact and connected and $G$ is an infinite connected, locally finite,
regular graph. 

In this section, fix a compact connected metric space $(K,d_K)$ and a
metric space $(L,d_L)$ such that
\begin{enumerate}
\item[(P1)] $d_L(u,v)\ge1$ for all distinct $u,v\in L$, and every bounded
subset of $L$ is finite.
\end{enumerate}
In particular, $L$ is countable and every compact subset of $L$ is finite.
Consider $K\times L$ with the supremum metric. We write $\pi_K$ and $\pi_L$
for the two canonical projections.

The following two lemmas are minor variants of
\cite[Lemmas 4.1 and 4.2]{hida2026plasticity}, and their proofs are analogous.
We include them for completeness. They allow us to track the connected
components under a nonexpansive bijection and reduce the problem to $\K L$.

\begin{lemma}\label{connected-components}
For each $F \in \K L$, the set
$S_F=\{\, C \in \K {K \times L} : \pi_L[C] = F \,\}$
is a compact clopen subset of $\K{K\times L}$, and it is a connected component of
$\K {K \times L}$.
\end{lemma}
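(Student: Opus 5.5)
The argument rests on one estimate: the map $\pi_L^*\colon\K{K\times L}\to\K L$, $C\mapsto\pi_L[C]$, is well defined and $1$-Lipschitz. Well-definedness holds because $\pi_L$ is continuous, so $\pi_L[C]$ is compact and nonempty; by (P1) it is then a finite subset of $L$. For the Lipschitz bound, the supremum metric satisfies $d_L(\pi_L(x),\pi_L(y))\le d((x),(y))$. Hence for $C,C'\in\K{K\times L}$ each point of $\pi_L[C]$ lies within $d_H(C,C')$ of $\pi_L[C']$, and vice versa. This gives $d_H(\pi_L[C],\pi_L[C'])\le d_H(C,C')$.

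\textbf{Clopenness.} We have $S_F=(\pi_L^*)^{-1}(\{F\})$. Since distinct points of $\K L$ are at Hausdorff distance at least $1$ by (P1), $\K L$ is uniformly discrete. Thus every singleton $\{F\}$ is clopen in $\K L$, and by continuity of $\pi_L^*$ the set $S_F$ is clopen. More quantitatively, $d_H(C,C')\ge1$ whenever $\pi_L[C]\neq\pi_L[C']$.

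\textbf{Compactness.} If $\pi_L[C]=F$, then $C\subseteq K\times F$. Conversely, a compact $C\subseteq K\times F$ lies in $S_F$ exactly when $C\cap(K\times\{u\})\neq\emptyset$ for every $u\in F$. Now $K\times F$ is compact because $F$ is finite, so $\K{K\times F}$ is compact. The set $S_F$ is the preimage of the single point $F$ under the continuous map $\pi_L^*$ restricted to $\K{K\times F}$, so it is closed in $\K{K\times F}$ and therefore compact.

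\textbf{Connectedness.} This is the main step. Write $F=\{u_1,\dots,u_m\}$. The map
\[
\Phi\colon \K K^m\to S_F,\qquad (A_1,\dots,A_m)\mapsto \bigcup_{i}A_i\times\{u_i\}
\]
is a bijection. Because distinct fibres $K\times\{u_i\}$ are at distance at least $1$, for small distances one has
\[
d_H(\Phi(A),\Phi(B))=\max_i d_H(A_i,B_i).
\]
In fact, by compactness of $K$ one can check directly that $d_H(\Phi(A),\Phi(B))\le\max_i d_H(A_i,B_i)$ always holds, so $\Phi$ is continuous. It is classical that $\K K$ is connected when $K$ is compact and connected (Nadler). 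A finite product of connected spaces is connected, and its continuous image $S_F$ is therefore connected. Finally, a connected clopen set is a connected component, so $S_F$ is a connected component of $\K{K\times L}$.

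The only point needing care is the connectedness of $\K K$. If a self-contained argument is preferred to citing Nadler, I would first show that the singletons form a connected copy of $K$. Then every $A\in\K K$ is joined to it through the connected chain of sets $\{B:\,a\in B\subseteq A\}$, or via the union map from $\K K\times\K K$.
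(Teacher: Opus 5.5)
Your proof is correct and is essentially the paper's: clopenness comes from sets with different $L$-projections being at Hausdorff distance at least $1$ (your $1$-Lipschitz map $\pi_L^*$ is just a tidy packaging of this estimate), compactness comes from $S_F$ being closed in the compact space $\K{K\times F}$, and connectedness comes from $S_F$ being the continuous image of $\K{K}^m$ under $(A_i)_i\mapsto\bigcup_i A_i\times\{u_i\}$ together with Nadler's theorem. One caution, which affects only your optional self-contained aside: the family $\{B: a\in B\subseteq A\}$ is not connected in general (for $A=\{a,b\}$ it is the two-point set $\{\{a\},\{a,b\}\}$, whose points are at distance $d(a,b)$), so to avoid citing Nadler argue instead that the sets with at most $n$ points are continuous images of $K^n$, each such family contains the connected family of singletons, and their union is dense in $\K{K}$.
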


\begin{proof}
Let $C \in S_F$ and $D \notin S_F$. Since $C$ and $D$ differ in at least one point of
the second coordinate, (P1) gives $d_H(C,D) \geq 1$. In particular, $S_F$ is clopen in
$\K {K \times L}$. Moreover, $S_F$ is a closed subset of $\K{K\times F}$, which is
compact by \cite[Theorem 0.8]{nadler1978hyperspaces}, so $S_F$ is compact.

To see that $S_F$ is connected, write $F= \{v_1, \ldots, v_m\}$ and consider the map
$\varphi\colon {\K K}^m \to \K {K \times L}$ given by
$\varphi((C_i)_{i=1}^m) = \bigcup_{i=1}^m C_i\times\{v_i\}$. The map is onto $S_F$ and is continuous because
$\varphi[\prod_{i=1}^m B(C_i, r)] \subset B(\bigcup_{i=1}^m C_i\times\{v_i\}, r)$ for
every $r>0$ and every $(C_i)_{i=1}^m$. Since ${\K K}$ is connected
\cite[Theorem 1.13]{nadler1978hyperspaces}, so is ${\K K}^m$, and therefore $S_F$ is
connected. Since $S_F$ is both connected and clopen, it is a connected component.
\end{proof}

\begin{lemma}\label{inequality_hausdorff}
Let $F_1, F_2 \in \K L$. Then $d_H(A,B)\geq d_H(F_1,F_2)$ for every $A \in S_{F_1}$
and $B \in S_{F_2}$.
\end{lemma}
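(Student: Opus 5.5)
The plan is to compare the two Hausdorff distances one projected point at a time, using that the projection $\pi_L$ is $1$-Lipschitz for the supremum metric. Fix $A\in S_{F_1}$ and $B\in S_{F_2}$, so that $\pi_L[A]=F_1$ and $\pi_L[B]=F_2$. Both sets are compact, so both Hausdorff distances are finite, and by symmetry of the definition of $d_H$ it suffices to bound each of the two one-sided suprema defining $d_H(F_1,F_2)$ by $d_H(A,B)$.

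First one-sided term. Let $u\in F_1$ and choose $a=(x,u)\in A$ with $\pi_L(a)=u$. For every $b=(y,w)\in B$ we have
\[
d_L(u,w)\le\max\{d_K(x,y),d_L(u,w)\}=d(a,b),
\]
and $w$ ranges over all of $F_2$ as $b$ ranges over $B$. Taking the infimum over $b\in B$ gives $d_L(u,F_2)\le d(a,B)\le d_H(A,B)$. Taking the supremum over $u\in F_1$ yields $\sup_{u\in F_1}d_L(u,F_2)\le d_H(A,B)$.

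Second one-sided term. The symmetric argument, starting from $w\in F_2$ and a point $b\in B$ with $\pi_L(b)=w$, gives $\sup_{w\in F_2}d_L(w,F_1)\le d_H(A,B)$. Combining the two bounds gives $d_H(F_1,F_2)\le d_H(A,B)$.

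There is no real obstacle here. The only points needing care are that every element of $F_i$ is realized as a projection of some point of the corresponding set, which holds because $\pi_L[A]=F_1$ and $\pi_L[B]=F_2$, and that no compactness is needed to pass from the pointwise inequality to the infimum. Hypothesis (P1) is not used.
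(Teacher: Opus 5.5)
Your proof is correct and follows essentially the same route as the paper: both use that $\pi_L$ is $1$-Lipschitz for the supremum metric, and both compare $d_L(u,F_2)$ with $d(a,B)$ for a point $a\in A$ lying over $u$. The difference is minor. The paper reduces to one one-sided term by symmetry and uses finiteness of $F_1,F_2$ to pick $v_1,v_2$ realizing $d_H(F_1,F_2)$, whereas you bound both one-sided suprema directly and so never need that the extrema are attained.
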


\begin{proof}
By symmetry, we may assume $d_H(F_1,F_2)=\sup_{v\in F_1} d(v,F_2)$. Since $F_1$ and
$F_2$ are finite, there exist $v_1\in F_1$ and $v_2\in F_2$ with
$d_H(F_1,F_2)=d(v_1,F_2)=d(v_1,v_2)$. Since $A\in S_{F_1}$, there exists $a\in K$ with
$(a,v_1) \in A$, and for any $(b,v)\in B$ we have $v\in F_2$, whence
$d((a,v_1),(b,v)) \geq d(v_1,v)\geq d(v_1, v_2)$. Therefore
\[
d_H(F_1,F_2) = d(v_1,v_2) \leq d\big((a,v_1),B\big)
\leq \sup_{(x,v)\in A} d((x,v),B) \leq d_H(A,B). \qedhere
\]
\end{proof}

\begin{lemma}[Sierpi\'nski]\label{sierpinski}
If a compact connected metric space $K$ has a countable cover by pairwise disjoint nonempty
closed subsets, then the cover has exactly one member.
\end{lemma}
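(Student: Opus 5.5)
Sierpiński's theorem is the statement to prove: a compact connected metric space $K$ cannot be split into countably many (at least two) pairwise disjoint nonempty closed sets. The plan is the classical argument via the Baire category theorem, applied to the closed set of boundary points. Suppose $K=\bigcup_{n\in\NN}F_n$ with the $F_n$ pairwise disjoint, nonempty and closed, and suppose there are at least two of them. If there are only finitely many, say $F_1,\dots,F_m$ with $m\ge2$, each $F_i$ is also open, since its complement is a finite union of closed sets. Then $F_1$ is a nontrivial clopen set, contradicting connectedness. So we may assume there are infinitely many members.

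Let $\operatorname{int}F_n$ denote the interior of $F_n$, and set
\[
E=\bigcup_n \partial F_n=K\setminus\bigcup_n \operatorname{int}F_n .
\]
This set is closed in $K$, hence compact, and therefore a Baire space. It is nonempty: otherwise the open sets $\operatorname{int}F_n=F_n$ would give a partition of $K$ into at least two open sets, which is impossible. Moreover $E=\bigcup_n(F_n\cap E)$ is a countable union of closed sets. By the Baire category theorem, some $F_n\cap E$ has nonempty interior relative to $E$. So there are an open set $U\subseteq K$ and an index $n$ with $\emptyset\ne U\cap E\subseteq F_n$.

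The main step is to derive a contradiction from this. Pick $x\in U\cap E$. Since $x\in E\cap F_n$, we have $x\in\partial F_n$, so $x$ is not interior to $F_n$. Shrink $U$ to a small open ball $W$ around $x$ with $\overline W\subseteq U$. Then $W$ meets some $F_m$ with $m\ne n$. The key lemma to establish is the following. If $W$ is open and $x\in W$ with $x\in\partial F_n$, then any $F_m$ ($m\ne n$) that meets $W$ but is not contained in $W$ must have a boundary point, relative to $K$, inside $\overline W$. This follows from connectedness: a compact connected space has no proper nonempty clopen subset, and by the boundary-bumping theorem every component of a closed subset of $K$ meets the boundary of that subset. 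Applied to $F_m$, this gives points of $\partial F_m\subseteq E$ in $\overline W\subseteq U$, hence points of $E\cap U$ outside $F_n$. That contradicts $U\cap E\subseteq F_n$.

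The case where $F_m\subseteq W$ for every $m\ne n$ meeting $W$ must also be handled. In that case, take $V=W\setminus F_n$. Its closure meets only the sets $F_m$, $m\ne n$, lying inside $W$, together with $F_n$. I expect this case to be the main technical obstacle. My first attempt will be to use compactness and closedness of $F_n$ to choose $W$ so that $\partial W$ avoids all $F_m$ with $m\ne n$. Then $\overline W\cap\bigcup_{m\ne n}F_m$ equals $W\cap\bigcup_{m\ne n}F_m$, which is open, and closed because it equals $\overline W\setminus F_n$. It is nonempty and proper, which contradicts connectedness of $K$.

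Alternatively, the classical reference (Kuratowski, \emph{Topology II}, or Engelking, Theorem 6.1.27) can simply be cited, since the paper uses this lemma as a black box.
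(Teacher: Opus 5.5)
Citing Engelking, Theorem 6.1.27, as in your fallback, is exactly what the paper does. Your finite case and your Baire reduction (an open $U$ and an index $n$ with $\emptyset\ne U\cap E\subseteq F_n$) are also correct. The self-contained argument, however, has a genuine gap at the step that should produce the contradiction. Since $E\cap F_m=\partial F_m$, the Baire step gives $U\cap\partial F_m=\emptyset$ for every $m\ne n$, so each $F_m\cap U$ is open in $K$ and clopen in $U$. Your ``key lemma'' asserts that some such $F_m$ has a boundary point in $\overline W\subseteq U$. That is, it asserts that this configuration is impossible, which is the whole content of the theorem, and your justification does not prove it. Boundary bumping applied to $F_m$ puts a point of $\partial F_m$ on each component of $F_m$, but says nothing about where; here that point is necessarily outside $U$. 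Boundary bumping applied to $\overline W$ gives a continuum joining a point of $F_m\cap W$ to $\overline W\setminus W$, but that continuum lies entirely in $F_m$, because $F_m\cap U$ is clopen in $U$. In fact the contradiction cannot be found inside $U$. Take the Cantor fan with vertex at height $1$, a point $c$ of the Cantor set $\mathcal C$, and a partition $\mathcal C\setminus\{c\}=\bigcup_mP_m$ into clopen sets. The open set $\mathcal C\times[0,\tfrac12)$ splits into the piece $\{c\}\times[0,\tfrac12)$ and the relatively clopen pieces $P_m\times[0,\tfrac12)$. This is exactly the local structure the Baire step produces, and the obstruction lies near the vertex. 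Your second case is also wrong as written: $\overline W\setminus F_n$ is not closed, since $x\in\partial F_n$ lies in its closure. That case is actually trivial: if $F_m\subseteq W\subseteq U$, then $F_m=F_m\cap U$ is a proper nonempty clopen set.

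The missing idea is a global compactness argument, and with it the Baire step becomes unnecessary. Build subcontinua $K=C_0\supseteq C_1\supseteq\cdots$, each meeting at least two members of the cover, with $C_i\cap F_i=\emptyset$. If $C_{i-1}$ misses $F_i$, put $C_i=C_{i-1}$. Otherwise, choose $j\ne i$ with $C_{i-1}\cap F_j\ne\emptyset$ and an open set $V$ of $C_{i-1}$ such that $C_{i-1}\cap F_j\subseteq V$ and $\overline V\cap F_i=\emptyset$. Let $C_i$ be the component of $\overline V$ through a point of $F_j$. By boundary bumping, $C_i$ meets $\overline V\setminus V$, which misses both $F_i$ and $F_j$. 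Hence $C_i$ meets $F_j$ and a third member, and misses $F_i$. Then $\bigcap_iC_i$ is nonempty by compactness but misses every $F_i$, contradicting that the $F_i$ cover $K$.
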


\begin{proof}
See \cite[Theorem 6.1.27]{engelking}.
\end{proof}

\begin{lemma}\label{component-map-general}
Let $f:\K{K\times L}\to\K{K\times L}$ be a nonexpansive bijection. Then there
is a nonexpansive bijection $h:\K L\to\K L$ such that
\[
f[S_F]=S_{h(F)}\qquad(F\in\K L).
\]
\end{lemma}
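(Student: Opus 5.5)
We use that the components $S_F$ ($F\in\K L$) partition $\K{K\times L}$ into compact connected clopen sets (Lemma~\ref{connected-components}), and that a continuous bijection must respect this structure.

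\emph{Step 1: each component lands in a single component.} Fix $F\in\K L$. Since $f$ is nonexpansive, it is continuous, so $f[S_F]$ is connected. By Lemma~\ref{connected-components} the components are clopen and pairwise disjoint, so $f[S_F]$ lies inside a unique component $S_{h(F)}$. This defines $h\colon\K L\to\K L$.

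\emph{Step 2: $f[S_F]=S_{h(F)}$.} This is the main obstacle, since $f^{-1}$ need not be continuous. Write $G=h(F)$. Because $f$ is a bijection, $S_G$ is the disjoint union of the sets $f[S_{F'}]\cap S_G$ over $F'\in\K L$. By Step 1, each nonempty such intersection equals $f[S_{F'}]$.

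Each $f[S_{F'}]$ is compact, being the continuous image of a compact set, and hence closed. The index set $\K L$ is countable, because $L$ is countable by (P1) and every element of $\K L$ is finite.

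So $S_G$ has a countable cover by pairwise disjoint nonempty closed sets. The space $S_G$ is compact, connected and metric, so Lemma~\ref{sierpinski} says the cover has exactly one member. Since $f[S_F]$ is a nonempty member, $f[S_F]=S_G$.

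\emph{Step 3: $h$ is a bijection.} If $h(F)=h(F')$, then $f[S_F]=f[S_{F'}]$. Injectivity of $f$ gives $S_F=S_{F'}$, so $F=F'$. For surjectivity, take any $G\in\K L$ and any $C\in S_G$. Then $f^{-1}(C)$ lies in some $S_F$, and so $S_{h(F)}$ meets $S_G$, which forces $h(F)=G$.

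\emph{Step 4: $h$ is nonexpansive.} Let $F_1,F_2\in\K L$. For each $v\in F_i$ pick a point $a\in K$ and set $A_i=\{a\}\times F_i\in S_{F_i}$; using the same point $a$ for both. Then $d_H(A_1,A_2)=d_H(F_1,F_2)$, since the supremum metric ignores the identical first coordinates.

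By Step 2, $f(A_i)\in S_{h(F_i)}$. Lemma~\ref{inequality_hausdorff} and nonexpansiveness of $f$ then give
\[
d_H(h(F_1),h(F_2))\le d_H(f(A_1),f(A_2))\le d_H(A_1,A_2)=d_H(F_1,F_2).
\]
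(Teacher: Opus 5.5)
Your proof is correct and follows the paper's argument. Continuity places each $f[S_F]$ inside a single component $S_{h(F)}$. Sierpi\'nski's theorem, applied to the countable disjoint closed cover of $S_{h(F)}$ by the images $f[S_{F'}]$, forces $f[S_F]=S_{h(F)}$. Nonexpansiveness then comes from the test sets $\{a\}\times F_i$ together with Lemma~\ref{inequality_hausdorff}.

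The only difference is bookkeeping. You derive injectivity of $h$ afterwards from injectivity of $f$, whereas the paper obtains injectivity and $f[S_F]=S_{h(F)}$ at once by showing that $\{H: f[S_H]\subset S_{h(F)}\}$ is a singleton. One small wording fix in Step 4: you should simply fix one point $a\in K$, not pick one ``for each $v\in F_i$''.
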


\begin{proof}
Let $F \in \K L$. Since $S_F$ is connected (Lemma~\ref{connected-components}) and $f$
is continuous, $f[S_F]$ is connected. Hence, by Lemma~\ref{connected-components} there exists
$h(F)\in\K L$ with $f[S_F] \subset S_{h(F)}$.

\medskip
\noindent\textbf{Claim.} The map $h \colon \K L \to \K L$ is a nonexpansive bijection.

\smallskip
\noindent\emph{Injectivity.} Let $S = \{ H \in \K L : f[S_H] \subset S_{h(F)} \}$, so
that $F \in S$ and $f[\bigcup_{H \in S} S_H] \subset S_{h(F)}$. Let
$A\in S_{h(F)}$ and choose $P\in\K L$ such that $f^{-1}(A) \in S_P$. Then
$A \in f[S_P] \subset S_{h(P)}$. In particular, $h(P)=h(F)$ and $P \in S$. Then
\[
S_{h(F)} \subset f\Bigl[\bigcup_{H \in S} S_H \Bigr] = \bigcup_{H \in S} f[S_H].
\]
Each $f[S_H]$ is compact, hence closed, and these sets are pairwise disjoint because
$f$ is injective. Since $\K L$ is countable by (P1), we have that $S\subset \K L$ is countable. Then Lemma~\ref{sierpinski} gives
$|S|=1$, so $S=\{F\}$ and therefore, $h$ is injective. Moreover, $f[S_F]=S_{h(F)}$.

\smallskip
\noindent\emph{Surjectivity.} Given $F\in\K L$ and $A\in S_F$, choose $P\in\K L$ such that
$f^{-1}(A) \in S_P$. Then $A \in f[S_P] = S_{h(P)}$ and $A\in S_F$, so $F = h(P)$.

\smallskip
\noindent\emph{Nonexpansiveness.} Let $F_1,F_2\in\K L$ and fix $x\in K$. Then
$f(\{x\} \times F_i) \in S_{h(F_i)}$, so Lemma~\ref{inequality_hausdorff} gives
\[
\begin{aligned}
d_H(F_1,F_2)&=d_H(\{x\}\times F_1,\{x\}\times F_2)\\
&\ge d_H\bigl(f(\{x\}\times F_1),f(\{x\}\times F_2)\bigr)\\
&\ge d_H\bigl(h(F_1),h(F_2)\bigr),
\end{aligned}
\]
which proves the claim.

\end{proof}

\begin{theorem}\label{main_product}
Let $K$ be a compact connected metric space and let $L$ satisfy \textup{(P1)}.
Suppose also that
\begin{enumerate}
\item[(P2)] for every nonexpansive bijection $h$ of $\K L$, there exist a
bijective isometry $g:L\to L$ and nonempty finite sets
$L_1\subseteq L_2\subseteq\cdots$, with $\bigcup_nL_n=L$, such that
\[
h[\K{L_n}]=\K{g[L_n]}\qquad(n\ge1).
\]
\end{enumerate}
Then $\K{K\times L}$ is plastic.
\end{theorem}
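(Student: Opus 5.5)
Let $f\colon\K{K\times L}\to\K{K\times L}$ be a nonexpansive bijection. By Lemma~\ref{component-map-general} there is a nonexpansive bijection $h$ of $\K L$ with $f[S_F]=S_{h(F)}$ for every $F\in\K L$. Apply (P2) to $h$ to obtain a bijective isometry $g\colon L\to L$ and finite sets $L_1\subseteq L_2\subseteq\cdots$ with union $L$ and $h[\K{L_n}]=\K{g[L_n]}$. Let $\tilde g=\mathrm{id}_K\times g$. This is a bijective isometry of $K\times L$ with the supremum metric, so the induced map $\tilde g^*(A)=\tilde g[A]$ is a bijective isometry of $\K{K\times L}$, and $\tilde g^*[S_F]=S_{g[F]}$. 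Put $H=(\tilde g^*)^{-1}\circ f$. Then $H$ is a nonexpansive bijection, and it suffices to show that $H$ is an isometry.

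Next I show that $H$ preserves each $\K{K\times L_n}$. Since $\K{K\times L_n}=\bigcup_{F\in\K{L_n}}S_F$ (a compact set $C\subseteq K\times L_n$ lies in $S_{\pi_L[C]}$ and $\pi_L[C]\in\K{L_n}$), we get
\[
H[\K{K\times L_n}]=\bigcup_{F\in\K{L_n}}(\tilde g^*)^{-1}[S_{h(F)}]=\bigcup_{F\in\K{L_n}}S_{g^{-1}[h(F)]}.
\]
Because $h[\K{L_n}]=\K{g[L_n]}$, the sets $g^{-1}[h(F)]$ with $F\in\K{L_n}$ run exactly over $\K{L_n}$. Hence $H[\K{K\times L_n}]=\K{K\times L_n}$, and the restriction of $H$ to $\K{K\times L_n}$ is a nonexpansive bijection of this space onto itself.

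Now $K\times L_n$ is compact, since $L_n$ is finite. So $\K{K\times L_n}$ is compact and hence totally bounded (or use Proposition~\ref{hyper-total-bound}). By Theorem~\ref{tb-plastic}, the restriction of $H$ to $\K{K\times L_n}$ is an isometry. Given $A,B\in\K{K\times L}$, the sets $\pi_L[A]$ and $\pi_L[B]$ are finite by (P1). Since the $L_n$ increase with union $L$, some $n$ satisfies $A,B\in\K{K\times L_n}$. Therefore $d_H(H(A),H(B))=d_H(A,B)$, so $H$ is an isometry, and so is $f=\tilde g^*\circ H$.

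The main obstacle I anticipate is bookkeeping rather than depth. One must check that composing with $\tilde g^*$ turns $h[\K{L_n}]=\K{g[L_n]}$ into exact invariance of $\K{K\times L_n}$. One must also check that the metric on $\K{K\times L_n}$ is the restriction of $d_H$ on $\K{K\times L}$, which is immediate. The substantive work, namely that $f$ induces a bijection $h$ on components, is already done in Lemma~\ref{component-map-general}.
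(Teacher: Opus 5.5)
Your proposal is correct and follows essentially the same route as the paper. The paper also obtains $h$ from Lemma~\ref{component-map-general}, applies (P2), composes $f$ with the isometry induced by $(x,v)\mapsto(x,g^{-1}(v))$ (the same as your $(\tilde g^*)^{-1}$), shows that each compact hyperspace $\K{K\times L_n}$ is invariant, and concludes via Proposition~\ref{hyper-total-bound}. The only difference is bookkeeping: the paper proves invariance by a separate inclusion and surjectivity argument, whereas you compute the image directly as a union of components.
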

\begin{proof}
Let $f:\K{K\times L}\to\K{K\times L}$ be a nonexpansive bijection.
By Lemma~\ref{component-map-general}, there is a nonexpansive bijection
$h:\K L\to\K L$ such that $f[S_F]=S_{h(F)}$ for every $F\in\K L$.

\textbf{Normalization}: By (P2), choose an isometry $g$ and sets $(L_n)_n$
such that $h[\K{L_n}]=\K{g[L_n]}$.
Define $J:K\times L\to K\times L$ by $J(x,v)=(x,g^{-1}(v))$.
Then $J$ is an isometry, and so is its induced map $J_*(A)=J[A]$.
Let $\widetilde f=J_*\circ f: \K{K\times L}\to\K{K\times L}$. Then 
$$\widetilde f[S_F] = (J_*\circ f)[S_F] = J_*[S_{h(F)}] = S_{g^{-1}[h(F)]}
$$
Define
$\widetilde h(F)=g^{-1}[h(F)]$ and let $(g^{-1})_*(A)=g^{-1}[A]$ be the map induced by $g^{-1}$ on $\K L$. Then $\widetilde f[S_F]=S_{\widetilde h(F)}$ and
$$\widetilde h[\K{L_n}]=(g^{-1})_*[h[\K{L_n}]] = (g^{-1})_*[\K{g[L_n]}] = \K{L_n}.$$

Let us prove that $\widetilde f$ restricts to an isometry from $\K{K\times L_n}$ into itself.

\textbf{Invariance}: Fix $n\ge1$. Let $A\in\K{K\times L_n}$ and suppose
$A\in S_F$. Then $F=\pi_L[A]\subseteq L_n$, so
$\widetilde h(F)\subseteq L_n$. Since
$\widetilde f(A)\in S_{\widetilde h(F)}$, we conclude that
$\widetilde f(A)\in\K{K\times L_n}$.
This shows that $\widetilde f[\K{K\times L_n}]\subset \K{K\times L_n}$.

To prove surjectivity of this restriction, let $B\in\K{K\times L_n}$
and write $E=\pi_L[B]\subset L_n$. Since
$\widetilde h[\K{L_n}]=\K{L_n}$, there exists $F\in\K{L_n}$ such that
$\widetilde h(F)=E$. Moreover,
$\widetilde f[S_F]=S_{\widetilde h(F)} = S_E$, so there is $A\in S_F$ with $\widetilde f(A)=B$.
As $F\subseteq L_n$, this set $A$ belongs to $\K{K\times L_n}$.
Thus $\widetilde f$ restricts to a nonexpansive bijection of
$\K{K\times L_n}$ onto itself.

\textbf{Isometry}: Since $L_n$ is finite and $K$ is compact, $K\times L_n$
is compact. By Proposition~\ref{hyper-total-bound},
$\K{K\times L_n}$ is plastic, and therefore the restriction of
$\widetilde f$ to this space is an isometry.
Now let $A,B\in\K{K\times L}$. Their projections onto $L$ are finite, and
the sets $L_n$ are increasing with union $L$. Hence there is $n$ such that
$\pi_L[A]\cup\pi_L[B]\subseteq L_n$. It follows that
\[
d_H(A,B)=d_H(\widetilde f(A),\widetilde f(B))
       =d_H(f(A),f(B)),
\]
where the last equality holds because $J_*$ is an isometry.
This shows that $\widetilde f$ is an isometric bijection. Since $\widetilde f=J_*\circ f$ and $J_*$ is also an isometric bijection, it follows that $f$ is an isometry and therefore, $\K{K\times L}$ is plastic.
\end{proof}

\begin{corollary}\label{cor:product-regular}
Let $K$ be a compact connected metric space and let $G$ be an infinite connected,
locally finite, regular graph. Then $\K{K\times G}$ is plastic.
In particular, $\K{K\times T_k}$ is plastic for every $k\ge2$.
\end{corollary}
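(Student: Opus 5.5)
We derive Corollary~\ref{cor:product-regular} from Theorem~\ref{main_product} with $L=G$, the vertex set under its path metric. The work is to check (P1) and (P2); the statement about $T_k$ then follows because $T_k$ is an infinite connected, locally finite, $k$-regular graph.

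\textbf{(P1).} The path metric takes integer values, so distinct vertices are at distance at least $1$. A bounded subset of $G$ lies in some ball $\B v n$, which is finite by Lemma~\ref{countable}.

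\textbf{(P2).} Fix a vertex $v_0\in G$ and set $L_n=\B{v_0}n$ for $n\ge1$. These sets are finite and nonempty by Lemma~\ref{countable}, and they increase with $n$. Their union is $G$ because $G$ is connected. Now let $h$ be a nonexpansive bijection of $\K G$. Corollary~\ref{cor:normalized-balls} gives a bijective isometry $g_0$ of $G$ such that
\[
h[\K{\B v n}]=\K{g_0[\B v n]}\qquad(v\in G,\ n\ge1).
\]
Taking $v=v_0$ and $g=g_0$, this is exactly $h[\K{L_n}]=\K{g[L_n]}$ for all $n\ge1$, which is (P2).

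Theorem~\ref{main_product} then shows that $\K{K\times G}$ is plastic.

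The only point that needs attention is that Corollary~\ref{cor:normalized-balls} really supplies the same isometry $g_0$ for all radii $n$. In its proof, $g$ comes from Lemma~\ref{lem:extension} applied to the quotient isometry $q$ of Theorem~\ref{automorphism_g}. Neither $q$ nor $g$ depends on $n$. The identity $H[\K{\B c n}]=\K{\B c n}$ was proved for every $c\in G$ and every $n\ge1$ with this single $H=g^*\circ F$. So one fixed $g$ works for every $L_n$, as (P2) requires.
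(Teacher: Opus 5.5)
Your proposal is correct and follows the paper's proof essentially verbatim. You verify (P1) via Lemma~\ref{countable}, obtain (P2) from Corollary~\ref{cor:normalized-balls} with $L_n=\B{v_0}{n}$, and then apply Theorem~\ref{main_product}. Your extra remark that a single isometry $g_0$ serves all radii $n$ is accurate; the paper's argument uses this implicitly.
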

\begin{proof}
Condition (P1) follows from Lemma~\ref{countable}. For every nonexpansive
bijection $h$ of $\K G$, Corollary~\ref{cor:normalized-balls} gives an
isometry $g$ such that
$h[\K{\B v n}]=\K{\B{g(v)}n}=\K{g[\B v n]}$ for every integer $n\ge1$. For a fixed $v$, define $L_n = \B v n$ for every $n\geq 1$. This gives condition (P2), so
Theorem~\ref{main_product} applies.
\end{proof}

\section{Nonregular graphs}\label{sec:nonregular}
In this section, we consider nonregular graphs. We begin with an example showing that connectedness and local finiteness alone do not imply plasticity, even for the graph itself.

\begin{example}\label{ex:nonplastic}
Let $G$ have vertex set $\Z$ and edges $z\sim z+1$ for every $z\in\Z$, together
with $n\sim n+2$ for every $n\in\NN$ (with $0\in\NN$). Then $G$ is connected and
locally finite. Consider the shift $f(z)=z+1$. Then $f$ is a nonexpansive bijection. However, $d(-1,1)=2$
while $f(-1)=0\sim2=f(1)$, so $f$ is not an isometry and $G$ is not plastic. Observe that $G$ is a nonregular graph metric space containing cycles.
\end{example}

In this example, the minimum degree is $2$, and every negative vertex has
this degree. Thus there are infinitely many vertices of minimum degree.
The criterion in Corollary~\ref{cor:finite-min-degree} will require this set
to be finite.

We now give a sufficient condition for a nonregular graph and its
hyperspace to be plastic. To this end, we apply the theory of \emph{plastic pairs} of Kadets and Zavarzina
\cite{KZpairs} to obtain a criterion based on finite metric subspaces.
 A pair $(X,Y)$ of metric spaces is called
\emph{plastic} if every nonexpansive bijection from $X$ onto $Y$ is an
isometry.

\begin{definition}
Let $(X,d)$ be a finite metric space. Put
\[
\delta(X)=\sum_{\{a,b\}\subseteq X,\ a\ne b}d(a,b),
\]
where each unordered pair is counted once.
\end{definition}

For a finite graph $G$ with the path metric, $\delta(G)$ is the \emph{Wiener index}
$W(G)$.

\begin{lemma}\label{plastic-pairs}
Let $X$ and $Y$ be finite metric spaces and let $F\colon X\to Y$ be a nonexpansive
bijection with $\delta(X)\le\delta(Y)$. Then $F$ is an isometry.
\end{lemma}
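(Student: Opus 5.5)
The plan is a term-by-term comparison of distances. Since $F$ is a bijection, it induces a bijection between the unordered pairs of distinct points of $X$ and those of $Y$, namely $\{a,b\}\mapsto\{F(a),F(b)\}$. Because $F$ is injective, the image pair again consists of two distinct points. Reindexing the sum defining $\delta(Y)$ through this correspondence gives
\[
\delta(Y)=\sum_{\{a,b\}\subseteq X,\ a\ne b} d_Y\bigl(F(a),F(b)\bigr).
\]

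Next I would form the difference
\[
\delta(X)-\delta(Y)=\sum_{\{a,b\}\subseteq X,\ a\ne b}\Bigl(d_X(a,b)-d_Y\bigl(F(a),F(b)\bigr)\Bigr).
\]
Each summand is nonnegative because $F$ is nonexpansive, so $\delta(X)\ge\delta(Y)$. Together with the hypothesis $\delta(X)\le\delta(Y)$, this forces equality. All sums are finite, since $X$ is finite, so a finite sum of nonnegative reals that equals $0$ has every term equal to $0$. Hence $d_Y(F(a),F(b))=d_X(a,b)$ for all distinct $a,b$, and trivially also for $a=b$. So $F$ is an isometry.

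There is no real obstacle here. The only point requiring care is the reindexing in the first step: it uses bijectivity of $F$ to ensure that every unordered pair in $Y$ is counted exactly once. Alternatively, one may simply cite the finite plastic pair criterion of \cite{KZpairs}, but the direct argument above is self-contained.
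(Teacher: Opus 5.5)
Your proof is correct and complete. Because $F$ is bijective, $\{a,b\}\mapsto\{F(a),F(b)\}$ is a bijection of unordered pairs, so nonexpansiveness gives $\delta(Y)\le\delta(X)$ term by term. The hypothesis $\delta(X)\le\delta(Y)$ then forces every term to be an equality.

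The paper gives no argument of its own here; it simply cites \cite[Theorem 2.1]{KZpairs}. Your reindexing argument proves the lemma directly and needs only the definitions. It is presumably also the reason the cited criterion holds. The citation saves a few lines and places the statement within Kadets and Zavarzina's general theory of plastic pairs.
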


\begin{proof}
See \cite[Theorem 2.1]{KZpairs}.
\end{proof}

\begin{theorem}\label{general_finite_minimum}
Let $(X,d)$ be a metric space in which every ball is finite. Put
$m=\min\{|\B x1|:x\in X\}$ and $X'=\{x\in X:|\B x1|=m\}$. If $X'$ is finite, then
$X$ is plastic.
\end{theorem}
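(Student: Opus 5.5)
Let $f\colon X\to X$ be a nonexpansive bijection. The plan is to pin down where $f$ sends each point of $X'$, and then use a nested family of finite balls around such a point together with Lemma~\ref{plastic-pairs}.

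\textbf{Step 1: $f$ permutes $X'$.} By Lemma~\ref{general_fact_injectivity}, $|\B x1|\le|\B{f(x)}1|$ for every $x$. So $f^{-1}(y)\in X'$ whenever $y\in X'$, because its ball has size at most $m$ and hence exactly $m$. Thus $f^{-1}[X']\subseteq X'$. As $X'$ is finite and nonempty (the minimum is attained since ball sizes are positive integers) and $f$ is injective, $f[X']=X'$. Hence $f|_{X'}$ is a permutation, and some power $p=f^N$ fixes every point of $X'$. The map $p$ is again a nonexpansive bijection, and if $p$ is an isometry then so is $f$. Indeed, $d(x,y)=d(p(x),p(y))\le d(f(x),f(y))\le d(x,y)$, since $p=f^{N-1}\circ f$ with $f^{N-1}$ nonexpansive. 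So we may assume $f$ fixes a point $x_0\in X'$.

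\textbf{Step 2: balls centred at $x_0$.} By Lemma~\ref{general_fact_injectivity}, $f[\B{x_0}r]\subseteq\B{x_0}r$ for every $r>0$. Since this ball is finite and $f$ is injective, $f$ maps $\B{x_0}r$ bijectively onto itself. The restriction is a nonexpansive bijection of a finite metric space onto itself. It is therefore an isometry, by Theorem~\ref{tb-plastic} or by Lemma~\ref{plastic-pairs} with $\delta$ equal on both sides. Any two points $x,y\in X$ lie in $\B{x_0}r$ for $r$ large, since $d(x_0,x)<\infty$. Hence $d(f(x),f(y))=d(x,y)$.

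\textbf{Main obstacle.} The only delicate point is the reduction to a fixed point. Without it, $f$ maps $\B xr$ into $\B{f(x)}r$, but we cannot compare the two sizes directly. If one prefers to avoid passing to $f^N$, the route suggested in the introduction works instead. Follow the finite orbit $x,f(x),\dots,f^{N}(x)=x$ of a point of $X'$. Each inclusion $f[\B{f^i(x)}r]\subseteq\B{f^{i+1}(x)}r$ gives nondecreasing cardinalities around the cycle, so all of them are equalities and each restriction is a bijection between finite balls. The sums $\delta$ are then nonincreasing along the cycle under nonexpansive bijections, hence all equal. Lemma~\ref{plastic-pairs} then makes each step an isometry. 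The power trick is simpler, and I would present that one.
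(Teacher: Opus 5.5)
Your proof is correct, but your main route differs from the paper's. The paper's proof is exactly your fallback. It follows the orbit $x,f(x),\dots,f^{N+1}(x)=x$ of a point of $X'$ and shows that the cardinalities $|\B{f^i(x)}r|$ are all equal, and then that the sums $\delta(\B{f^i(x)}r)$ are all equal. It then invokes the Kadets--Zavarzina criterion (Lemma~\ref{plastic-pairs}) to conclude that each restriction $\B{f^i(x)}r\to\B{f^{i+1}(x)}r$ is an isometry between two \emph{different} finite spaces.

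Your power trick replaces $f$ by $p=f^N$, so that a point $x_0\in X'$ is fixed. Here $N\ge1$ must be the order of $f|_{X'}$ (any positive period of one point suffices), so that $p=f^{N-1}\circ f$ and the chain $d(x,y)=d(p(x),p(y))\le d(f(x),f(y))\le d(x,y)$ makes sense. After this reduction, only self-maps of the finite balls $\B{x_0}r$ occur. Plasticity of finite spaces (Theorem~\ref{tb-plastic}) then suffices, with no comparison of $\delta$ across distinct balls.

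The cost of your route is the small extra step that an isometric iterate forces $f$ to be an isometry, which you verify correctly. In exchange, the paper's route gives a direct statement about $f$ itself: each $f|_{\B{f^i(x)}r}$ is an isometry onto $\B{f^{i+1}(x)}r$, without passing to iterates.

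Both arguments are elementary. Lemma~\ref{plastic-pairs} reduces to the counting $\delta(Y)\le\delta(X)\le\delta(Y)$, so the difference between the two proofs is one of economy rather than strength.
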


\begin{proof}
Let $f\colon X\to X$ be a nonexpansive bijection. If $f(x)\in X'$ then
$|\B x1|\le|\B{f(x)}1|=m$ by Lemma~\ref{general_fact_injectivity}, so $x\in X'$ by
minimality. Then $f^{-1}[X']\subseteq X'$ and hence $X'\subseteq f[X']$. As $X'$
is finite and $f$ is injective, $f[X']=X'$, so $f$ permutes the finite set $X'$.
Choose $x\in X'$ and $N\ge0$ with $f^{N+1}(x)=x$.

Fix $r>0$. By Lemma~\ref{general_fact_injectivity},
$f[\B{f^{i}(x)}r]\subseteq\B{f^{i+1}(x)}r$ for every $i$, so
\[
|\B xr|\le|\B{f(x)}r|\le\cdots\le|\B{f^{N+1}(x)}r|=|\B xr|.
\]
In particular, all these cardinalities are equal. By Lemma~\ref{general_fact_injectivity}, $f$
restricts to a nonexpansive bijection of $\B{f^{i}(x)}r$ onto $\B{f^{i+1}(x)}r$, so
$\delta(\B{f^{i+1}(x)}r)\le\delta(\B{f^{i}(x)}r)$. Using again the fact that $f^{N+1}(x)=x$, we conclude that $\delta(\B xr)=\delta(\B{f(x)}r)$. Thus $|\B xr|=|\B{f(x)}r|$ and
$\delta(\B xr)=\delta(\B{f(x)}r)$ for every $r$. By Lemma~\ref{plastic-pairs}, the restriction of $f$ to $\B xr$ is an isometry onto $\B{f(x)}r$ for every $r>0$. Since any two points of $X$ belong to a common ball $\B xr$, it follows that $f$ is an isometry.
\end{proof}

\begin{corollary}\label{cor:finite-min-degree}
Let $G$ be an infinite connected, locally finite graph, $k$ its minimum degree and
$G'=\{v:\deg(v)=k\}$. If $G'$ is finite, then both $G$ and $\K G$ are plastic.
\end{corollary}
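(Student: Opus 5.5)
The plan is to apply Theorem~\ref{general_finite_minimum} twice: once to $X=G$ and once to $X=\K G$. Both spaces have finite balls. For $G$ this is Lemma~\ref{countable}. For $\K G$, take $A\in\K G$ and $r>0$. By Lemma~\ref{lem:hyperball}, every $U\in\B Ar$ satisfies $U\subseteq\bigcup_{a\in A}\B ar$, and this union is a finite set by Lemma~\ref{countable}. Hence $\B Ar$ is contained in the finite family of nonempty subsets of that union. So in each case it only remains to check that the set of points whose unit ball has minimum cardinality is finite.

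For $G$, recall that $|\B v1|=\deg(v)+1$. The minimum of $|\B v1|$ is therefore $k+1$, attained exactly at the vertices of $G'$. Since $G'$ is finite by hypothesis, Theorem~\ref{general_finite_minimum} shows that $G$ is plastic.

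For $\K G$, first note that $G$ is infinite and connected, so it has no isolated vertices, and hence $k\ge1$. Theorem~\ref{thm:min-hyperballs} then applies. It says that the minimum of $|\B A1|$ over $A\in\K G$ is $2^{k+1}-1$, attained exactly when $A$ is a nonempty subset of a twin class consisting of vertices of degree $k$. Every such $A$ is a nonempty subset of the finite set $G'$. So the family of minimizers has at most $2^{|G'|}-1$ elements and is finite. Theorem~\ref{general_finite_minimum}, applied to $X=\K G$, now gives plasticity of $\K G$.

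There is no real obstacle. The only points needing care are finiteness of hyperspace balls, which is handled above, and the requirement $k\ge1$ in Theorem~\ref{thm:min-hyperballs}. Alternatively, plasticity of $G$ also follows from that of $\K G$ by \cite{hida2026plasticity}, but the direct argument above is just as short.
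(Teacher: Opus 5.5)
Your proposal is correct and follows the paper's proof essentially step for step. You get finiteness of balls in $G$ and $\K G$ from Lemma~\ref{countable} and Lemma~\ref{lem:hyperball}, then apply Theorem~\ref{general_finite_minimum} twice: to $G$, where the minimizers are exactly $G'$, and to $\K G$, where, using $k\ge1$ and Theorem~\ref{thm:min-hyperballs}, the minimizers lie in the finite family $\K{G'}$.
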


\begin{proof}
All balls of $G$ are finite by Lemma~\ref{countable}, and the set of vertices at
which $|\B v1|$ is minimal is exactly $G'$, so
Theorem~\ref{general_finite_minimum} applies to $G$.

All balls of $\K G$ are finite as well. Indeed, Lemma~\ref{countable} shows that finite-radius neighbourhoods of finite subsets of $G$ are finite, and Lemma~\ref{lem:hyperball} then gives the finiteness of the corresponding Hausdorff balls. Let $M$ be the family of sets minimizing $|\B A1|$ in $\K G$.
Since $G$ is infinite and connected, $k\ge1$, and
Theorem~\ref{thm:min-hyperballs} gives
\[
M=\bigcup_{\substack{C\in Q_G\\ C\subseteq G'}}\K C
\subseteq\K{G'}.
\]
As $G'$ is finite, so is $M$. Thus
Theorem~\ref{general_finite_minimum} applies to $\K G$ as well.
\end{proof}

In particular, $\K T$ is plastic for every infinite locally finite tree with
a nonempty finite set of leaves, since the leaves are exactly its vertices
of minimum degree.

The preceding criterion applies when there are only finitely many vertices
of minimum degree. If there are infinitely many such vertices, it does not
give a conclusion. Moreover, the weighted-neighbourhood identity used in
Theorem~\ref{automorphism_g} depends on regularity. Thus the results of this
section cover only certain classes of nonregular graphs, and the following
question remains open.

\begin{question}
Let $G$ be a plastic infinite connected, locally finite graph. Is $\K G$ plastic?
\end{question}

\section*{Acknowledgements}

The author thanks G.~Vituri\footnote{viturivituri@gmail.com} for drawing his attention, during the XXVI Brazilian
Topology Meeting, to the space of Definition~\ref{def:Xm} as a candidate answer to
Question~\ref{main-question}.

\section*{Declaration of generative AI and AI-assisted technologies}

A generative AI model was used as a research and writing aid during the preparation of this manuscript. In particular, the AI-assisted tool supported the discussion and exploration of preliminary ideas and results developed by the authors, occasionally offering useful insights and suggesting possible directions for further investigation. All mathematical arguments, proofs, interpretations, and final formulations were independently developed, reviewed, and approved by the author, who assume full responsibility for the content of this work.

\end{document}